\documentclass [12pt]{article}
\usepackage{amssymb}
\usepackage{amsmath}
\usepackage{epsfig}
\usepackage{graphicx}
\usepackage{amsthm}
\usepackage{amssymb}
\usepackage{amsfonts}
\usepackage{dirtytalk}
\allowdisplaybreaks
\usepackage{enumerate}
\usepackage{enumerate}

\newcommand{\HH}{\mathbb{H}}
\newtheorem{theorem}{Theorem}
[section]
\newtheorem{corollary}{Corollary}
[theorem]
\newtheorem{lemma}[theorem]{Lemma}

\begin{document}
\thispagestyle{empty}
\null\vspace{-1cm}
\medskip
\vspace{1.75cm}
\centerline{\textbf{{\ On the Right Eigenvalues of the Quaternionic Matrix Polynomials}}}
~~~~~~~~~~~~~~~~~~~~~~~~~~~~~~~~~~~~~~~~~~~~~~~~~~~~~~~~~~~~~~~~~~~~~~~~~~~~~~~~~~~~~~~~~~~~~~~~~~~~~~~~~~~~~~~~~~~~~~~~~~~~~~~~~~~~~~~~~~~~~~~~~~~~~~~~~~~

\centerline{\bf  {Ovaisa Jan} and  {Idrees Qasim}}
\centerline {Department of Mathematics, National Institute of Technology, Srinagar, India-190006}
\centerline { ovaisa\_2022phamth009@nitsri.ac.in, idreesf3@nitsri.ac.in}
~~~~~~~~~~~~~~~~~~~~~~~~~~~~~~~~~~~~~~~~~~~~~~~~~~~~~~~~~~~~~~~~~~~~~~~~~~~~~~~~~~~~~~~~~~~~~~~~~~~~~~~~~~~~~~~~~~~~~~~~~~~~~~~~~~~~~~~~~~~~~~~~~~~~~~~~~
\vskip0.1in
\textbf{Abstract:} This paper establishes new upper bounds for the right eigenvalues of monic matrix polynomials over the quaternion division algebra. The noncommutative nature of quaternion multiplication presents fundamental challenges in eigenvalue analysis, distinguishing this problem from the classical complex case. We use spectral norm inequalities for partitioned quaternionic matrices and apply them to quaternionic block companion matrices associated with monic matrix polynomials. By analyzing the structure of powers of these companion matrices we derive progressively sharper bounds for the right eigenvalues. Consequently, these bounds give bounds for the zeros of quaternionic polynomials. \\

\noindent {{\bf Keywords:} Quaternionic matrices, Quaternionic matrix polynomials, Right spectral radius, Companion quaternionic matrix.}
	\vspace{0.10in}
	~~~~~~~~~~~~~~~~~~~~~~~~~~~~~~~~~~~~~~~~~~~~~~~~~~~~~~~~~~~~~~~~~~~~~~~~~~~~~~~~~~~~~~~~~~~~~~~~~~~~~~~~~~~~~~~~~~~~~~~~~~~~~~~~~~~~~~~~~~~~~~~~~~~~~~~~~
	
	\noindent {{\bf Mathematics Subject Classiﬁcation (2020):} 12E15, 34L15, 15A18, 15A66.}\\
	\vspace{0.1in}

\section{Introduction}
The problem of locating eigenvalues of matrix polynomials constitutes a fundamental area of matrix analysis with significant implications across scientific and engineering disciplines. Higham and Tisseur \cite{HT} pioneered the establishment of bounds for eigenvalues of complex matrix polynomials, initiating a rich line of inquiry that has been extensively developed \cite{BDN, BM, MCP, NVS}. The eigenvalues of the block companion matrix of a monic matrix polynomial and the eigenvalues of that matrix polynomial are same. So, there is a nice method to obtain the bounds for the absolute values of the eigenvalues of a matrix polynomial by finding the bounds for the absolute values of eigenvalues for the block companion matrix of a monic complex polynomial. The extension of such results to the quaternionic setting, however, encounters substantial obstacles due to the noncommutative nature of quaternion multiplication.\\
\indent The quaternion division algebra, discovered by Hamilton in 1844 \cite{WR}, provides a four-dimensional extension of the real numbers that preserves algebraic closure properties but sacrifices commutativity. This noncommutativity gives rise to two distinct types of eigenvalues left and right eigenvalues making the spectral analysis of quaternionic matrices and matrix polynomials significantly more intricate than their complex counterparts. Quaternionic matrices have been extensively studied with particular attention to eigenvalue existence and localization (for example, see \cite{AS, BA, BJL, LHC}). Similarly, the location of zeros of quaternionic polynomials has been extensively studied (for example, see\cite{QI, GVM, NBD}).\\
\indent Early work on bounding zeros of quaternionic polynomials was carried out by Opfer \cite{OG}. Some recent developments on the
location and computation of zeros of quaternionic polynomials can be found in \cite{AS, QI, NBD}. Subsequently Ahmad and Ali \cite{ ASSA} extended these ideas to matrix polynomials, providing bounds for eigenvalues via companion matrix constructions and norm estimates. Ali and Truhar \cite{INT} introduced localization regions for right eigenvalues of quaternionic matrix polynomials using Ostrowski-type theorems for block matrices and developed a right spectral radius inequality that yields both upper and lower bounds. Specifically, they established that for a monic quaternionic matrix polynomial
\[
\mathbf{L}(\lambda) = I\lambda^m + A_{m-1}\lambda^{m-1} + \cdots + A_1\lambda + A_0,
\]
all right eigenvalues lie within the disk
\[
\mathcal{K} = \left\{ z \in \mathbb{H} : |z| \leq \frac{1}{2}\left[1 + \|A_{m-1}\|_2 + \sqrt{\left(\|A_{m-1}\|_2 - 1\right)^2 + 4\sqrt{\sum_{i=0}^{m-2}\|A_i\|_2^2}} \right] \right\}=B_{1},
\]
as well as within another disk \(\mathcal{G}\) given by an analogous bound. Their approach also produced bounds for complex matrix polynomials and illustrated improvements over earlier results. More recently,  Jan and Qasim \cite{QJ} provided both upper and lower bounds for the absolute values of right eigenvalues of quaternionic matrix polynomials. Their work leverages subordinate norms of the coefficient matrices and provides quaternionic analogues of classical results, such as Cauchy's theorem.

In this paper, we establish new upper bounds for the right eigenvalues of monic matrix polynomials over the quaternion division algebra by using spectral norm inequalities for partitioned quaternionic matrices and applying them to quaternionic block companion matrices. Unlike previous approaches that analyze the companion matrix directly, we consider its higher powers to obtain progressively sharper bounds.

\section{Preliminary Knowledge.} Let $\mathbb{C}$ and $\mathbb{R}$ denote the fields of the complex and real numbers respectively. Let $\mathbb{H}$ be the set of quaternions defined as $$\mathbb{H}:=\{q=a+bi+cj+dk:a, b, c, d \in\mathbb{R} \},$$  
	 
	\noindent where $i^2=j^2=k^2=ijk=-1,~ij=k=-ji,~jk=i=-kj,~ki=j=-ik.$\\
For $q \in\mathbb{H}$,~ ${\bar{q}}=a-bi-cj-dk$ is the conjugate of $q$ and hence the modulus of a quaternion $q$ is given by $|q|=\sqrt{a^{2}+b^{2}+c^{2}+d^{2}}.$ The set of \(n\)-column vectors with entries in \(\mathbb{H}\) is denoted by \(\mathbb{H}^n\). The inner product for \(x, y \in \mathbb{H}^n\) is \(\langle x, y \rangle := y^H x\), and the norm is \(\|x\| := \sqrt{\langle x, x \rangle}\).
  ${M}_{m\times n}{(\mathbb{H})}$ denotes the space of ${m\times n}$ quaternion matrices and ${M}_{n}{(\mathbb{H})}$ denotes the space of ${n\times n}$ matrices over $\mathbb{H}.$ For $A=[a_{ij}] \in{M}_{n}{(\mathbb{H})}$ the transpose of $A$ is  \(A^T=[a_{ji}]\) and the conjugate transpose of $A$ is \(A^H=\overline{(A^T)}\).  Next we define norms on matrix $A.$
  For a square matrix $A\in M_{n}(\mathbb{H})$
  with enteries $a_{ij}\in\mathbb{H}$, the following norms are defined as \\ 
 1. The 1-Norm: $\|A\|_{1}:=\max\limits_{1\le j \le n}\sum\limits_{i=1}^{n}|a_{ij}|=\|A^{H}\|_{\infty}$\\   
 2. The $\infty$-Norm: $\|A\|_{\infty}:=\max\limits_{1\le i \le n}\sum\limits_{j=1}^{n}|a_{ij}|=\|A^{H}\|_{1}$\\
 3. The 2-Norm: $\|A||_{2}:=\sup\limits_{x\neq 0}\left\{ \frac{\|Ax\|_{2}}{\|x\|_{2}}:~x\in\mathbb{H}^{n}                         \right\} =\|A^{H}\|_{2}$\\
 4. The Frobenius Norm: $\|A\|_{F}:={(~\mbox{trace}~   A^{H}A)}^{1/2}.$\\ 
 
Since quaternion multiplication is non-commutative there exists two types of eigenvalues, namely left and right.\\

 \noindent\textbf{Definition 2.1:} Let $A\in M_{n}{(\mathbb{H})}$. Then the set of left eigenvalues of $A$ is defined as:\\
 $$\Lambda_{l}(A):=\{ \mu \in\mathbb{H} : Ax=\mu x ~\mbox{for some non-zero} ~x\in\mathbb{H}^{n} \}$$
 and the set of right eigenvalues of $A$ is defined as:
$$\Lambda_{r}(A):=\{ \mu \in\mathbb{H} : Ax=x \mu  ~\mbox{for some non-zero} ~x\in\mathbb{H}^{n} \}.$$

\noindent\textbf{Definition 2.2:} Let $A\in M_{n}{(\mathbb{H})}$. Then the left spectral radius of matrix $A$ is given as:\\  
$$ \rho_{l}(A) := \max \left\{|\mu|: \mu\in\Lambda_{l}(A)\right\}.$$
Similarly, the right spectral radius of a matrix $A$ is defined as:\\
$$ \rho_{r}(A) := \max \left\{|\mu|: \mu\in\Lambda_{r}(A)\right\}.$$

 \noindent

 Let \( \mathbb{P}_k(M_n(\mathbb{C})) \) be the space of matrix polynomials over the field of complex numbers. Then, \( \mathcal{P} \in \mathbb{P}_k(M_n(\mathbb{C})) \) is defined by  
\[\mathcal{P}(z) = \sum_{i=0}^k z^i A_i, \quad A_i \in M_n(\mathbb{C}), z \in \mathbb{C}.\]  
For some non zero \( x \in \mathbb{C}^n \), we have complex polynomial eigenvalue problem \( \mathcal{P}(z)x = 0 \). The polynomial \( \mathcal{P} \in \mathbb{P}_k(M_n(\mathbb{C})) \) is said to be regular if \( \det(\mathcal{P}(z)) \neq 0 \) for some \( z \in \mathbb{C} \). The set of eigenvalues of a regular polynomial \( \mathcal{P} \in \mathbb{P}_k(M_n(\mathbb{C})) \) is defined by  

\[\Lambda(\mathcal{P}) := \{ z \in \mathbb{C} : \det(\mathcal{P}(z)) = 0 \}.\]
 The above can be extended to the space of matrix polynomials over skew field of quaternions.\\

\noindent\textbf{Definition 2.3:} The space of right matrix polynomials of degree $(\leq k)$ over quaternion division algebra is denoted by \begin{equation}
    \mathbb{L}_{k}(M_n(\mathbb{H})) :=\left\{ L(\xi)=\sum\limits_{i=0}^{k} A_{i}\xi^{i} ,~ A_{i} \in M_{n}(\mathbb{H}),~ 0\leq i \leq k \right\}.\label{eq:01}
\end{equation}
where  $\xi$ commutes with quaternionic coefficients of matrix polynomial.\\
 \noindent Now, we define the right eigenvalues of $L\in\mathbb{L}_{k}(M_n(\mathbb{H})).$\\
 
 \noindent\textbf{Definition 2.4:} $\lambda\in\mathbb{H}$ is called a right eigenvalue of the polynomial $L \in\mathbb{L}_{k}(M_n(\mathbb{H}))$  if $A_0x+A_{1}x \lambda +A_{2}x \lambda^{2} +\cdots+A_{k}x\lambda^{k}=0$ for some non-zero $x\in\mathbb{H}^{n},$ where $x$ is the right eigenvector corresponding to right eigenvalue $\lambda.$ The set of right eigenvalues of the matrix polynomial $L$ is called the right spectrum of the polynomial $L$, denoted by $\Lambda_{r}{(L)}.$
\section{Bounds on the Right Eigenvalues of Quaternionic Matrix Polynomials.}
To find the bounds on right eigenvalues of quaternionic matrix polynomials. Assume $A_k = I_n$ in (\ref{eq:01}), we obtain the quaternionic monic matrix polynomial ${L}(\mu)$ as follows:

\begin{equation}
    {L}(\xi) := I\xi^k + A_{k-1}\xi^{k-1} + \cdots + A_1\xi + A_0, \label{eq:02}
\end{equation}
where $A_i \in M_n(\HH)$, $0 \leq i \leq k-1$, and $\xi$ commutes with quaternionic matrices. The right eigenvalues of ${L}(\mu)$ are given by the right eigenvalues of the $kn \times kn$ quaternionic companion matrix

\[
C_L := 
\begin{bmatrix}
0 & I_n & 0 & \cdots & 0 \\
0 & 0 & I_n & \cdots & 0 \\
\vdots & \vdots & \ddots & \ddots & \vdots \\
0 & 0 & 0 & \cdots & I_n \\
-A_0 & -A_1 & -A_2 & \cdots & -A_{k-1}
\end{bmatrix}.
\]
Next, we define the quaternionic block  matrix of $C_L$ as follows:

\[
C_B = \begin{bmatrix} C_{11} & C_{12} \\ C_{21} & C_{22} \end{bmatrix},
\]

where

\[
C_{11} = 
\begin{bmatrix}
0 & I_n & 0 & \cdots & 0 \\
0 & 0 & I_n & \cdots & 0 \\
\vdots & \vdots & \ddots & \vdots & \vdots \\
0 & 0 & 0 & \cdots & I_n \\
0 & 0 & 0 & \cdots & 0
\end{bmatrix}, \quad C_{12} = 
\begin{bmatrix}
0 \\
\vdots \\
0\\
I_n
\end{bmatrix},
\]

\[
C_{21} = 
\begin{bmatrix}
-A_0 & -A_1 & \cdots & -A_{k-2}
\end{bmatrix}, \quad C_{22} = -A_{k-1}.
\]
To find sharper bounds for right eigenvalues of ${L}(\xi)$, we take the power of the companion matrix $C_L$ which is as follows:

\[
C_L^2 :=
\begin{bmatrix}
0 & 0 & I_n & \cdots & 0 & 0 \\
\vdots & \vdots & \vdots & \dots & \vdots &\vdots\\
0 & 0 & 0 & \cdots & I_n & 0\\
0 & 0 & 0 & \cdots & 0& I_{n} \\
-A_0 & -A_1 & -A_2 & \cdots & -A_{k-2} & -A_{k-1}\\
B_0 & B_1 & B_2 & \cdots & B_{k-2} & B_{k-1}
\end{bmatrix},
\]
\[
C_L^3 :=
\begin{bmatrix}
0 & 0 &0& I_n & \cdots & 0 & 0 \\
\vdots & \vdots&\vdots & \vdots & \dots & \vdots &\vdots\\
0 & 0 & 0 &0& \cdots &  I_{n} & 0\\
0 & 0 & 0 & 0&\cdots & 0& I_{n} \\
-A_0 & -A_1 & -A_2 &-A_{3}& \cdots & -A_{k-2} & -A_{k-1}\\
B_0 & B_1 & B_2 & B_{3}&\cdots & B_{k-2} & B_{k-1}\\
C_0 & C_1 & C_2 & C_{3}&\cdots & C_{k-2} & C_{k-1}
\end{bmatrix},
\]
where $B_i = A_{k-1}A_i - A_{i-1}$, $0 \leq i \leq k-1,~ C_{i}=-A_{k-1}B_{i}+A_{k-2}A_{i}-A_{i-2},~$with~ $A_{-1}=A_{-2} = 0$.\\
Define the quaternionic block  matrix of $C_L^{2}$ and $C_L^{3}$ as follows:

\[
C_B^{2} = \begin{bmatrix} M_{11} & M_{12} \\ M_{21} & M_{22} \end{bmatrix},
\quad
C_B^{3} = \begin{bmatrix} K_{11} & K_{12} \\ K_{21} & K_{22} \end{bmatrix},
\]
where
\[
M_{11} = 
\begin{bmatrix}
0 & 0 & I_n & \cdots & 0 \\
\vdots & \vdots  & \vdots &  &\vdots\\
0 & 0 & 0 & \cdots & I_n \\
0 & 0 & 0 & \cdots & 0 \\
-A_0 & -A_1 & -A_2 & \cdots & -A_{k-2} \\
\end{bmatrix}, \quad M_{12} = 
\begin{bmatrix}
0 \\
\vdots \\
0\\
I_n\\ -A_{k-1}
\end{bmatrix},
\]

\[
M_{21} = 
\begin{bmatrix}
B_0 & B_1 & \cdots & B_{k-2}
\end{bmatrix}, \quad M_{22} = B_{k-1}
\] and
\[
K_{11} = 
\begin{bmatrix}
0 & 0 & 0& I_n & \cdots & 0 \\
\vdots & \vdots& \vdots  & \vdots &  &\vdots\\
0 & 0 & 0 & 0&\cdots & I_n \\
0 & 0 & 0 & 0&\cdots & 0 \\
0 & 0 & 0 & 0&\cdots & 0 \\
-A_0 & -A_1 & -A_2 & -A_{3}&\cdots & -A_{k-3} \\
\end{bmatrix}, \quad K_{12} = 
\begin{bmatrix}
0 & 0\\
\vdots& \vdots \\
0&0\\
I_{n}&0\\
0&I_{n}\\
-A_{k-2}&-A_{k-1}

\end{bmatrix},
\]

\[
K_{21} = 
\begin{bmatrix}
B_0 & B_1 & \cdots & B_{k-3}\\
C_0 & C_1 & \cdots & C_{k-3}
\end{bmatrix},  \quad K_{22} = 
\begin{bmatrix}
B_{k-2}&B_{k-1} \\
C_{k-2}&C_{k-1}\\ 
\end{bmatrix}.
\]

\section{Auxilliary Results}
\noindent For the proof of the theorems we need the following lemmas. The following lemma is mentioned in \cite{ASSA}
\begin{lemma}{\label{lemma 3.1}}
    Let $A= [a_{ij}]\in M_{n}(\mathbb{H)}.$ Then,  $\|A\|^2_{2}=\|A^{H}\|^{2}_{2}=\|A^{H}A\|_{2}=\|AA^{H}\|_{2}.$
\end{lemma}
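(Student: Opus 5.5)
The plan is to prove the C$^*$-identity for quaternionic matrices directly from the definition of $\|\cdot\|_2$ as an operator norm on $\mathbb{H}^n$. We use the inner product $\langle x,y\rangle=y^Hx$ and the Cauchy--Schwarz inequality $|y^Hx|\le\|x\|\|y\|$. Cauchy--Schwarz holds in $\mathbb{H}^n$ by the usual argument applied to $\|x-y\,t\|^2\ge 0$ with $t=(y^Hx)/\|y\|^2$, using $|pq|=|p||q|$. Two elementary facts will also be used: $(AB)^H=B^HA^H$, which follows from $\overline{pq}=\bar q\,\bar p$, and $\|Ax\|^2=x^HA^HAx$, a real nonnegative number.

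\textbf{Step 1: $\|A\|_2^2\le\|A^HA\|_2$.} For $\|x\|=1$,
\[
\|Ax\|^2=x^H(A^HAx)=\langle A^HAx,x\rangle\le\|A^HAx\|\,\|x\|\le\|A^HA\|_2 .
\]
Taking the supremum over $x$ gives the claim.

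\textbf{Step 2: the reverse inequality.} Submultiplicativity of the operator norm gives $\|A^HA\|_2\le\|A^H\|_2\|A\|_2$. Combining this with Step 1 yields $\|A\|_2^2\le\|A^H\|_2\|A\|_2$. If $A\neq 0$, this gives $\|A\|_2\le\|A^H\|_2$. Applying the same argument to $A^H$, and using $(A^H)^H=A$, gives the opposite inequality, so $\|A\|_2=\|A^H\|_2$. (The case $A=0$ is trivial.) Then
\[
\|A\|_2^2\le\|A^HA\|_2\le\|A^H\|_2\|A\|_2=\|A\|_2^2,
\]
so $\|A\|_2^2=\|A^HA\|_2$.

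\textbf{Step 3: the remaining identity.} Replacing $A$ by $A^H$ in Step 2 gives $\|A^H\|_2^2=\|AA^H\|_2$. Together with $\|A\|_2=\|A^H\|_2$, this yields the full chain
\[
\|A\|_2^2=\|A^H\|_2^2=\|A^HA\|_2=\|AA^H\|_2 .
\]

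The only delicate point is noncommutativity. The inner product must be taken as $y^Hx$, with scalars acting on the right, so that Cauchy--Schwarz and the identity $x^HA^HAx=\|Ax\|^2$ remain valid. I expect the proof of Cauchy--Schwarz in $\mathbb{H}^n$ with right scalars to be the main, though mild, obstacle.

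Alternatively, one could pass to the complex adjoint representation $\chi_A\in M_{2n}(\mathbb{C})$. This map satisfies $\chi_{A^H}=\chi_A^H$ and preserves both products and the 2-norm, so the result would follow from the complex case. The direct argument above avoids needing that the representation is isometric.
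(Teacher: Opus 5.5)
Your argument is correct. It does not reconstruct anything in the paper, because the paper gives no proof of this lemma at all: it only quotes it from Ahmad and Ali, and its preliminaries already write $\|A\|_2=\|A^H\|_2$ into the definition of the 2-norm.

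Your route is the abstract $C^*$-identity argument carried out directly in $\mathbb{H}^n$. It needs only three ingredients:
\begin{itemize}
\item Cauchy--Schwarz for $\langle x,y\rangle=y^Hx$ with right scalars. Your check via $\|x-yt\|^2\ge0$ works, since $x^Hy=\overline{y^Hx}$ makes both cross terms equal to $|y^Hx|^2/\|y\|^2$.
\item The rule $(AB)^H=B^HA^H$.
\item Submultiplicativity of the operator norm.
\end{itemize}
To divide in Step 2 you also need $0<\|A\|_2<\infty$ for $A\neq0$, which is immediate: $\|Ax\|\le\|A\|_F\|x\|$, and $Ae_j\neq0$ for any nonzero column $j$.

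The usual alternatives go through the complex adjoint matrix $\chi_A$, or through unitary diagonalization of Hermitian quaternionic matrices. These cost more machinery: isometry and multiplicativity of $\chi$, or the quaternionic spectral theorem. In return they deliver the stronger fact $\|A\|_2^2=\rho_r(A^HA)=\rho_r(AA^H)$. That is the form the paper actually uses, for example when it writes $\|S\|_2^2=\rho_r(SS^H)$ in the proof of Lemma~\ref{lemma 3.3}.

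Your proof establishes exactly the four-way equality as stated. To support that later use, you would additionally need $\|H\|_2=\rho_r(H)$ for Hermitian $H$, and that does require the spectral theorem.
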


\noindent The next lemma can be found in \cite{INT}.
    \begin{lemma}{\label{lemma 3.2}}
    Let \( A = [a_{ij}] \in M_n(\mathbb{H}) \) be partitioned as
    \[
    A = \begin{bmatrix}
        A_{11} & A_{12} \\
        A_{21} & A_{22}
    \end{bmatrix},
    \]
    where \( A_{ij} \in M_{n_i \times n_j}(\mathbb{H}) \) is the \((i,j)\) block of \( A \) such that \( n_1 + n_2 = n \), where \( i,j \in \{1,2\} \). If
    \[
    \tilde{A} = \begin{bmatrix}
        \|A_{11}\|_2 & \|A_{12}\|_2 \\
        \|A_{21}\|_2 & \|A_{22}\|_2
    \end{bmatrix},
    \]
    then the following inequalities hold:
    \begin{itemize}
        \item \( \rho_r(A) \leq \rho_r(\tilde{A}) \).
        \item \( \|A\|_2 \leq \|\tilde{A}\|_2 \).
    \end{itemize}
\end{lemma}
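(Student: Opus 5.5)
The plan is to reduce the statement to the complex case through the standard complex representation of quaternionic matrices. Write $A = A_1 + A_2 j$ with $A_1, A_2 \in M_{n_1+n_2}(\mathbb{C})$, and associate to it
\[
\chi_A = \begin{bmatrix} A_1 & A_2 \\ -\overline{A_2} & \overline{A_1} \end{bmatrix}\in M_{2n}(\mathbb{C}).
\]
The map $A\mapsto \chi_A$ is a real algebra homomorphism with $\chi_{A^H}=(\chi_A)^H$. It has two further properties that the argument will use.

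First, $\|A\|_2 = \|\chi_A\|_2$. This holds because $x = x_1 + x_2 j \mapsto (x_1, -\overline{x_2})$ is a real-linear isometry $\mathbb{H}^n\to\mathbb{C}^{2n}$ that intertwines $A$ with $\chi_A$.

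Second, the right eigenvalues of $A$ are exactly the quaternions similar to the eigenvalues of $\chi_A$. Since similar quaternions have equal modulus, this gives $\rho_r(A)=\rho(\chi_A)$.

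The main step is to permute the rows and columns of $\chi_A$ so that it becomes a $2\times 2$ block matrix whose $(i,j)$ block is $\chi_{A_{ij}}$ (after the same permutation inside each block). A permutation similarity preserves both the spectral radius and the spectral norm, and by the first property $\|\chi_{A_{ij}}\|_2=\|A_{ij}\|_2$. We then invoke the complex-case result for partitioned matrices, namely $\|M\|_2\le \|[\|M_{ij}\|_2]\|_2$ and $\rho(M)\le\rho([\|M_{ij}\|_2])$. Applied to the permuted $\chi_A$, this yields both inequalities at once, because $\tilde A$ is a real nonnegative matrix, so its right spectral radius coincides with its ordinary spectral radius.

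If we prefer a self-contained argument over citing the complex case, the norm inequality is direct. For $x=(x_1,x_2)$ we have
\[
\|Ax\|_2^2 \le \sum_i\Big(\sum_j\|A_{ij}\|_2\|x_j\|_2\Big)^2 = \|\tilde A\, v\|_2^2,
\]
where $v=(\|x_1\|_2,\|x_2\|_2)^T$ and $\|v\|_2=\|x\|_2$. This step uses only the triangle inequality and submultiplicativity, both of which survive noncommutativity.

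For the spectral radius, apply the same estimate to powers. Repeating the argument gives entrywise $\widetilde{A^m}\le \tilde A^m$, since $\|\sum_l (A^{m-1})_{il}A_{lj}\|_2\le\sum_l \|(A^{m-1})_{il}\|_2\|A_{lj}\|_2$. Combined with the norm inequality and the monotonicity of the spectral norm for nonnegative matrices, this gives $\|A^m\|_2\le\|\tilde A^m\|_2$. Next, if $Ax=x\mu$, then $A^m x = x\mu^m$, so $|\mu|^m\le\|A^m\|_2$. Taking $m$-th roots and using Gelfand's formula for the real matrix $\tilde A$ gives $\rho_r(A)\le\rho(\tilde A)=\rho_r(\tilde A)$.

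The delicate point I expect is the identification $\rho_r(\tilde A)=\rho(\tilde A)$ for a real matrix. Its right eigenvalues include all quaternions similar to its complex eigenvalues, and since similar quaternions have equal moduli, the identification holds.
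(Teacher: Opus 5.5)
The paper gives no proof of this lemma. It only states it and refers to Ali and Truhar \cite{INT}, so there is no in-text argument to compare yours against. Your proposal is correct, and it actually contains two independent complete proofs.

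\textbf{First route.} You use the complex adjoint $\chi_A$ and a permutation similarity that turns $\chi_A$ into the block matrix $[\chi_{A_{ij}}]$. You then use the isometry $\|\chi_{A_{ij}}\|_2=\|A_{ij}\|_2$ and the identity $\rho_r(A)=\rho(\chi_A)$. This reduces everything to the known complex block inequalities. It is short, but it imports two external facts:
\begin{enumerate}
\item the description of the right spectrum of $A$ as the similarity classes of the eigenvalues of $\chi_A$;
\item the complex theorem $\rho(M)\le\rho([\|M_{ij}\|_2])$.
\end{enumerate}
A small correction: if you order the rows and columns of $\chi_A$ as $(n_1,n_1,n_2,n_2)$, the blocks are exactly $\chi_{A_{ij}}$. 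No further permutation inside the blocks is needed.

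\textbf{Second route.} This one stays entirely inside $\mathbb{H}^n$. The norm bound uses only the triangle inequality and $\|A_{ij}x_j\|_2\le\|A_{ij}\|_2\|x_j\|_2$. The spectral radius bound combines:
\begin{enumerate}
\item the entrywise estimate $\widetilde{A^m}\le\tilde A^m$, proved by induction using $\tilde A\ge 0$;
\item monotonicity of the spectral norm on nonnegative matrices;
\item the identity $A^m x=x\mu^m$;
\item Gelfand's formula for the real $2\times 2$ matrix $\tilde A$.
\end{enumerate}
This argument makes the first route redundant. It works verbatim for any conformal $p\times p$ partition, and it needs no characterization of the right eigenvalues of $A$.

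\textbf{The point you flag as delicate is not delicate.} For the inequality you only need $\rho(\tilde A)\le\rho_r(\tilde A)$. That is immediate: a complex eigenpair $\tilde A x=\lambda x$ of the real matrix $\tilde A$ is already a right eigenpair, since $\lambda x=x\lambda$ when $x$ has complex entries. The reverse inequality is also true, but you do not need it.
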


\begin{lemma}{\label{lemma 3.3}}
    Let \[
S := 
\begin{bmatrix}
0 &0 & I_n & \cdots & 0&0 \\
0 & 0 &0& \cdots &0 & 0 \\
\vdots & \vdots & \vdots & \ddots & \vdots \\
0 & 0 & 0 & \cdots & I_n &0\\
0 & 0 & 0 & \cdots & 0 & I_{n}\\
0 & 0 & 0 & \cdots & 0 &0\\
-A_0 & -A_1 & -A_2 & \cdots & -A_{k-3}&-A_{k-2}
\end{bmatrix}.
\]
Then, $$\|S\|^{2}_{2}\le \dfrac{1}{2}\left( 1+ \xi_0+ \sqrt{(1+\xi_{0})^{2}-4(\|A_{0}\|_{2}^{2}+\|A_{1}\|_{2}^{2}})\right),$$
where $\xi_{0}=\sum_{i=0}^{k-2}\|A_{i}\|_{2}^{2}.$
\end{lemma}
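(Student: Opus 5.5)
The plan is to bound $\|S\|_2^2$ through $\|SS^{H}\|_2$ (Lemma \ref{lemma 3.1}), and then to estimate $SS^{H}$ with the block inequality of Lemma \ref{lemma 3.2}.

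\textbf{Step 1: the structure of $SS^{H}$.} Index the block rows of $S$. The rows containing a single $I_n$ each have exactly one identity block, placed in block columns $3,\dots,k-1$. These identities sit in distinct columns, so such rows are orthonormal among themselves. The zero row contributes nothing. The last row is $a=[-A_0,\,-A_1,\dots,-A_{k-2}]$.

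Hence $SS^{H}$, after deleting the zero block row and column (this does not change the spectral norm), has the form
\[
T=\begin{bmatrix} I & b\\ b^{H} & c\end{bmatrix},\qquad c=\sum_{i=0}^{k-2}A_iA_i^{H},
\]
where $b$ is the block column whose entries are $-A_j^{H}$ for $j=2,\dots,k-2$. These are the inner products of the identity rows with $a$. The key observation is that $A_0$ and $A_1$ never meet an identity block, so they are absent from $b$.

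\textbf{Step 2: apply Lemma \ref{lemma 3.2}.} This gives
\[
\|S\|_2^2=\|T\|_2\le \left\|\begin{bmatrix}1&\|b\|_2\\ \|b\|_2&\|c\|_2\end{bmatrix}\right\|_2 .
\]
We estimate the two entries separately.
\begin{itemize}
\item By Lemma \ref{lemma 3.1}, $\|b\|_2^2=\|b^{H}b\|_2=\bigl\|\sum_{j=2}^{k-2}A_jA_j^{H}\bigr\|_2\le \sum_{j=2}^{k-2}\|A_j\|_2^2=:\beta^2$.
\item Similarly, $\|c\|_2\le \xi_0=:\gamma$.
\end{itemize}

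\textbf{Step 3: replace the entries by their upper bounds.} The $2\times 2$ matrix is real, symmetric and entrywise nonnegative. Its spectral norm therefore equals its Perron root, which is monotone nondecreasing in each entry. So we may substitute $\beta$ and $\gamma$, obtaining
\[
\|S\|_2^2\le \tfrac12\Bigl(1+\gamma+\sqrt{(1-\gamma)^2+4\beta^2}\Bigr).
\]

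\textbf{Step 4: rewrite the radicand.} We have
\[
(1-\gamma)^2+4\beta^2=(1+\gamma)^2-4(\gamma-\beta^2),
\]
and $\gamma-\beta^2=\|A_0\|_2^2+\|A_1\|_2^2$. This is exactly the claimed bound.

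\textbf{Main obstacle.} The delicate point is Step 1: the bookkeeping of exactly which $A_j$ appear in the off-diagonal block $b$ (namely $j\ge 2$ only). If the shift pattern of $S$ were read differently, the subtracted term would change. I would verify this by writing $S$ as the sum of a partial shift and a last-row block and computing $SS^{H}$ blockwise. The monotonicity in Step 3 can be justified either by Perron--Frobenius or directly by the explicit eigenvalue formula.
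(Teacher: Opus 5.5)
Your proposal is correct and is essentially the paper's own proof. Like the paper, you compute $SS^{H}$ and partition it into an identity block, the off-diagonal column of $-A_j^{H}$ ($2\le j\le k-2$), and $\sum_{i=0}^{k-2}A_iA_i^{H}$; you then apply Lemma~\ref{lemma 3.2}, substitute the bounds $\sum_{j=2}^{k-2}\|A_j\|_2^2$ and $\xi_0$, and rewrite the radicand. The differences are cosmetic: you delete the zero block row and column and use the norm form of Lemma~\ref{lemma 3.2} instead of $\rho_r$. Your explicit monotonicity argument in Step 3 supplies a step the paper leaves implicit.
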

\begin{proof} By the definition of matrix 2- norm we have $\|S\|_{2}^{2}=\rho_{r}(SS^{H})$, so to find $\|S\|^{2}_{2}$, we compute

     \[
SS^{H}:= 
\begin{bmatrix}
I_{n} &0 & 0 & \cdots & 0& 0& -{A_{2}}^{H}\\
0 & I_{n} &0& \cdots &0 &0& {-A_{3}}^{H} \\
\vdots & \vdots & \vdots & \ddots & \vdots \\
0 & 0 & 0 & \cdots & I_n &0& {-A_{k-2}}^{H}\\
0 & 0 & 0 & \cdots & 0 &0 &0\\
-A_2 & -A_3 & -A_4 & \cdots & -A_{k-2}&0& \sum_{i=0}^{k-2} A_{i}A_{i}^{H}
\end{bmatrix}.
\] We write $SS^{H}=A$ as: 
\[
    A = \begin{bmatrix}
        S_{11} & S_{12} \\
        S_{21} & S_{22}
    \end{bmatrix},
    \]
where \[
S_{11} := 
\begin{bmatrix}
I_{n} &0 & 0 & \cdots & 0&0\\
0 & I_{n} &0& \cdots &0 &0  \\
\vdots & \vdots & \vdots & \ddots & \vdots&\vdots \\
0 & 0 & 0 & \cdots & I_n&0 \\
0 & 0 & 0 & \cdots & 0 &0\\
\end{bmatrix},
\quad S_{12} := 
\begin{bmatrix}
-{A_{2}}^{H}\\
{-A_{3}}^{H} \\
\vdots \\ {-A_{k-2}}^{H}\\0\\
\end{bmatrix}. \]          
\[S_{21} := \begin{bmatrix}
    -A_2 &-A_3&\cdots& -A_{k-2} &0
\end{bmatrix},\quad 
S_{22}=\sum_{i=0}^{k-2}A_{i}A_{i}^{H}.\]
Define
\[
\tilde{A} = \begin{bmatrix}1 & \|S_{12}\|_{2} \\ \|S_{21}\|_2 & \|S_{22}\|_2 \end{bmatrix}.
\]
By direct calculation, we have the following values.\\ 
$$\|S_{12}\|_{2}=\|S_{21}\|_{2}\le \left(\sum_{i=2}^{k-2}\|A_{i}\|_{2}^{2}\right)^\frac{1}{2}$$ and $$\|S_{22}\|_{2} \le  \sum_{i=0}^{k-2}\|A_{i}\|_{2}^{2}.$$\\
By applying Lemma \ref{lemma 3.2}, we obtain
\newpage
\begin{align*}
\rho_r(A) &\leq \rho_r(\tilde{A}) \\
&= \begin{aligned}[t] &\frac{1}{2} \Bigl[ \|S_{11}\|_{2} + \|S_{22}\|_2 \\
&\qquad + \sqrt{\bigl( \|S_{11}\|_{2} + \|S_{22}\|_2 \bigr)^2 - 4 \bigl( \|S_{11}\|_{2}\|S_{22}\|_{2}-\|S_{12}\|_{2}\|S_{21}\|_{2} \bigr) } \Bigr] \end{aligned} \\
&\le \begin{aligned}[t] &\frac{1}{2} \Bigl[ 1 + \sum_{i=0}^{k-2}\|A_{i}\|_{2}^{2} \\
&\qquad + \sqrt{\bigl( 1 + \sum_{i=0}^{k-2}\|A_{i}\|_{2}^{2}\bigr)^{2} - 4 \Bigl(\sum_{i=0}^{k-2}\|A_{i}\|_{2}^{2}-\sum_{i=2}^{k-2}\|A_{i}\|^{2}_{2} \Bigr)} \Bigr] \end{aligned} \\
&=\frac{1}{2} \Bigl[ 1 + \sum_{i=0}^{k-2}\|A_{i}\|_{2}^{2} + \sqrt{\bigl( 1 + \sum_{i=0}^{k-2}\|A_{i}\|_{2}^{2}\bigr)^{2} - 4 \bigl( \|A_{0}\|^{2}_{2}+\|A_{1}\|^{2}_{2}\bigr)} \Bigr].
\end{align*}
\end{proof}
\noindent By an analysis similiar to that used in the proof of Lemma \ref{lemma 3.3}, we have the following result. 
\begin{lemma}{\label{lemma 3.4}}
    Let \[
N := 
\begin{bmatrix}
0 &0 & 0&I_n & \cdots & 0&0 \\
0 & 0 &0&0& \cdots &0 & 0 \\
\vdots & \vdots & \vdots&\vdots & \ddots & \vdots & \vdots\\
0 & 0 & 0 & 0&\cdots & 0 &I_{n}\\
0 & 0 & 0 &0& \cdots & 0 & 0\\
0 & 0 & 0 &0& \cdots & 0 &0\\
-A_0 & -A_1 & -A_2 &-A_3& \cdots & -A_{k-4}&-A_{k-3}
\end{bmatrix}.
\]
Then $$\|N\|^{2}_{2}\le \dfrac{1}{2}\left( 1+ \alpha+ \sqrt{(1+\alpha)^{2}-4\left(\|A_{0}\|_{2}^{2}+\|A_{1}\|_{2}^{2}+\|A_{2}\|^{2}_{2}\right)}\right),$$
where $\alpha=\sum_{i=0}^{k-3}\|A_{i}\|_{2}^{2}.$
\end{lemma}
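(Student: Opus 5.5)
The argument follows the proof of Lemma \ref{lemma 3.3}, with the shift now being by three block positions instead of two. By Lemma \ref{lemma 3.1}, $\|N\|_2^2=\|NN^H\|_2=\rho_r(NN^H)$, since $NN^H$ is Hermitian positive semidefinite. So it suffices to bound $\rho_r(NN^H)$. First I would compute $NN^H$ block by block. The upper rows of $N$ carry shifted identity blocks: row $i$ has $I_n$ in block column $i+3$ for the rows that are not zero. Their mutual inner products therefore give an identity (possibly with some zero diagonal blocks) on the leading block. The inner product of such a row with the last row $[-A_0,\dots,-A_{k-3}]$ picks out $-A_{i+3}^H$. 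Hence
\[
NN^H=\begin{bmatrix} N_{11} & N_{12}\\ N_{21} & N_{22}\end{bmatrix},
\]
where $N_{11}$ is a block diagonal matrix with diagonal blocks $I_n$ or $0$, so $\|N_{11}\|_2\le 1$. The column $N_{12}$ consists of the blocks $-A_3^H,\dots,-A_{k-3}^H$ padded with zeros, $N_{21}=N_{12}^H$, and $N_{22}=\sum_{i=0}^{k-3}A_iA_i^H$. The blocks $A_0,A_1,A_2$ drop out of $N_{12}$ because their block columns contain no identity block in the upper rows.

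Next I would estimate the block norms. Since $N_{12}$ is a stacked column, $\|N_{12}\|_2^2=\|N_{12}^HN_{12}\|_2=\|\sum_{i\ge3}A_iA_i^H\|_2\le\sum_{i=3}^{k-3}\|A_i\|_2^2$, using Lemma \ref{lemma 3.1} and the triangle inequality; the same bound holds for $N_{21}$. The triangle inequality together with Lemma \ref{lemma 3.1} also gives $\|N_{22}\|_2\le\alpha$. Applying Lemma \ref{lemma 3.2} yields $\rho_r(NN^H)\le\rho_r(\tilde A)$ with
\[
\tilde A=\begin{bmatrix}1&\|N_{12}\|_2\\ \|N_{21}\|_2&\|N_{22}\|_2\end{bmatrix}.
\]
This is a nonnegative symmetric $2\times 2$ real matrix, so its largest eigenvalue is
\[
\tfrac12\Bigl(1+d+\sqrt{(1+d)^2-4(d-b^2)}\Bigr)=\tfrac12\Bigl(1+d+\sqrt{(1-d)^2+4b^2}\Bigr),
\]
where $d=\|N_{22}\|_2$ and $b=\|N_{12}\|_2$.

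The main obstacle is the final monotonicity step. The eigenvalue expression must be bounded by replacing $d$ with $\alpha$ and $b^2$ with $\sum_{i\ge3}\|A_i\|_2^2$. In the second form above the eigenvalue is increasing in $b$. It is also increasing in $d$, because its derivative with respect to $d$ is $\tfrac12\bigl(1+(d-1)/\sqrt{(1-d)^2+4b^2}\bigr)\ge0$. Both substitutions are therefore legitimate. After substitution, $d-b^2$ becomes $\alpha-\sum_{i=3}^{k-3}\|A_i\|_2^2=\|A_0\|_2^2+\|A_1\|_2^2+\|A_2\|_2^2$, which gives exactly the stated bound.
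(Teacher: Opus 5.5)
Your proof is correct and takes the route the paper intends: the paper proves Lemma \ref{lemma 3.4} only by saying it follows from the same analysis as Lemma \ref{lemma 3.3}, which is what you carry out. That analysis forms $NN^H$, partitions off the last block row and column, bounds the block norms, applies Lemma \ref{lemma 3.2}, and evaluates the largest eigenvalue of the resulting $2\times 2$ matrix. Your check that $\tfrac12\bigl(a+d+\sqrt{(a-d)^2+4b^2}\bigr)$ is nondecreasing in $d$ and $b$ justifies the substitution step the paper performs without comment; the same computation in $a$ also justifies replacing $\|N_{11}\|_2$ by $1$.
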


\section{Main Results}
In this section we state and prove our main results.
\begin{theorem}{\label{theorem 3.5}}
    For every right eigenvalue $\xi$ of the quaternionic monic matrix polynomial $L(\xi)$ the following inequality holds:\\
    \begin{equation*}
|\xi| \le \left( \frac{1}{2} \left[  \xi_{1}+\|B_{k-1}\|_{2}  +\sqrt{(\xi_{1}-\|B_{k-1}\|_{2})^{2}+4\xi_{2} \sqrt{1+\|A_{k-1}\|^{2}_{2}}} \right] \right)^{\frac{1}{2}},
\end{equation*}\\
where \begin{equation*}
\xi_{1} = \left( \frac{1}{2} \left[ 1+\sum_{i=0}^{k-2}\|A_{i}\|_{2}^{2}  +  \sqrt{\left(1+\sum_{i=0}^{k-2}\|A_{i}\|_{2}^{2}\right)^{2}-4\left(\|A_{0}\|_{2}^{2}+\|A_{1}\|_{2}^{2}\right)} ~~\right] \right)^{\frac{1}{2}} 
\end{equation*}
and 

\begin{equation*}
    \xi_{2}=\left( \sum_{i=0}^{k-2} \|B_{i}\|_{2}^{2}\right)^{\frac{1}{2}}.
\end{equation*}
\end{theorem}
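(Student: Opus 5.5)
The plan is to pass from $L$ to the square of its block companion matrix and then apply the $2\times 2$ block spectral-radius inequality of Lemma \ref{lemma 3.2}.

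\textbf{Step 1: reduce to $C_L^2$.} If $\xi$ is a right eigenvalue of $L(\xi)$, then it is a right eigenvalue of $C_L$, so $C_Lx=x\xi$ for some nonzero $x$. Applying $C_L$ twice gives $C_L^2x=C_Lx\,\xi=x\xi^2$. Hence $\xi^2$ is a right eigenvalue of $C_L^2$, and
\[
|\xi|^2=|\xi^2|\le \rho_r(C_L^2).
\]
The statement therefore follows once we show that $\rho_r(C_L^2)$ is at most the bracketed quantity $\tfrac12\bigl[\cdots\bigr]$ in the theorem.

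\textbf{Step 2: block comparison.} Use the partition $C_B^2=\begin{bmatrix} M_{11}&M_{12}\\ M_{21}&M_{22}\end{bmatrix}$ and Lemma \ref{lemma 3.2}. This gives $\rho_r(C_L^2)\le\rho_r(\tilde A)$ with
\[
\tilde A=\begin{bmatrix}\|M_{11}\|_2&\|M_{12}\|_2\\ \|M_{21}\|_2&\|M_{22}\|_2\end{bmatrix}.
\]
For a nonnegative real $2\times2$ matrix $\begin{bmatrix}a&b\\c&d\end{bmatrix}$, the spectral radius is $\tfrac12\bigl[a+d+\sqrt{(a-d)^2+4bc}\bigr]$. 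This expression is increasing in each of $a,b,c,d$, so any upper bounds on the four block norms can be substituted directly.

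\textbf{Step 3: bound the four blocks.}
\begin{itemize}
\item $M_{11}$ is exactly the matrix $S$ of Lemma \ref{lemma 3.3}. That lemma gives $\|M_{11}\|_2^2$ at most the expression whose square root defines $\xi_1$, so $\|M_{11}\|_2\le\xi_1$.
\item $M_{12}$ is a block column with blocks $I_n$ and $-A_{k-1}$. Since $M_{12}^HM_{12}=I+A_{k-1}^HA_{k-1}$, Lemma \ref{lemma 3.1} gives $\|M_{12}\|_2^2\le 1+\|A_{k-1}\|_2^2$.
\item $M_{21}=[B_0\ \cdots\ B_{k-2}]$ is a block row. Since $M_{21}M_{21}^H=\sum_i B_iB_i^H$, we get $\|M_{21}\|_2\le\bigl(\sum_{i=0}^{k-2}\|B_i\|_2^2\bigr)^{1/2}=\xi_2$.
\item $M_{22}=B_{k-1}$ directly.
\end{itemize}

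\textbf{Step 4: assemble.} Substituting these bounds into the spectral-radius formula yields
\[
|\xi|^2\le \tfrac12\Bigl[\xi_1+\|B_{k-1}\|_2+\sqrt{(\xi_1-\|B_{k-1}\|_2)^2+4\xi_2\sqrt{1+\|A_{k-1}\|_2^2}}\Bigr].
\]
Taking square roots gives the claim.

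\textbf{Main obstacle.} The delicate point is monotonicity in the diagonal entries. The quantity $(a-d)^2$ is not monotone in $a$ by itself, but the full expression is. Indeed, $\partial_a\bigl(a+d+\sqrt{(a-d)^2+4bc}\bigr)=1+\frac{a-d}{\sqrt{(a-d)^2+4bc}}\ge 0$, because $|a-d|\le\sqrt{(a-d)^2+4bc}$. The same argument applies in $d$, and monotonicity in $b$ and $c$ is immediate. This is what justifies replacing $\|M_{11}\|_2$ by its upper bound $\xi_1$ inside the formula.
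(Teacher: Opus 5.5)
Your proposal is correct and follows the paper's proof essentially step by step. You use the same partition of $C_L^2$, Lemma~\ref{lemma 3.3} for $M_{11}$, the identity $M_{12}^HM_{12}=I+A_{k-1}^HA_{k-1}$ (which the paper obtains by writing $M_{12}=X+Y$), the row-block bound for $M_{21}$, and Lemma~\ref{lemma 3.2} followed by the explicit $2\times 2$ spectral radius. The only differences are that you make explicit the step $C_L^2x=x\xi^2$, which the paper leaves implicit, and that you justify monotonicity of the $2\times 2$ Perron root by differentiation rather than by the paper's entrywise comparison $\tilde{C}^2_B\le B$.
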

\begin{proof}
   Let
\[
C^{2}_{B} = \begin{bmatrix} M_{11} & M_{12} \\ M_{21} & M_{22} \end{bmatrix}.
\] be the block  matrix of $C^{2}_{L}$, where \[
M_{11} = 
\begin{bmatrix}
0 & 0 & I_n & \cdots & 0 \\
\vdots & \vdots  & \vdots &  &\vdots\\
0 & 0 & 0 & \cdots & I_n \\
0 & 0 & 0 & \cdots & 0 \\
-A_0 & -A_1 & -A_2 & \cdots & -A_{k-2} \\
\end{bmatrix}, \quad M_{12} = 
\begin{bmatrix}
0 \\
\vdots \\
0\\
I_n\\ -A_{k-1}
\end{bmatrix},
\]

\[
M_{21} = 
\begin{bmatrix}
B_0 & B_1 & \cdots & B_{k-2}
\end{bmatrix}, \quad M_{22} = B_{k-1},
\] such that $B_i = A_{k-1}A_i - A_{i-1}$, $0 \leq i \leq k-1~ \mbox~{with}~ A_{-1} = 0$.
Define
\[
\tilde{C}^{2}_{B} = \begin{bmatrix} \|M_{11}\|_{2} & \|M_{12}\|_{2} \\ \|M_{21}\|_2 & \|B_{k-1}\|_2 \end{bmatrix}.
\]
From Lemma \ref{lemma 3.2}, we have
$$\|C_{B}^{2}\|_{2} \le\|\tilde{C_{B}^{2}}\|_{2}$$\\
From Lemma \ref{lemma 3.3}, $\|M_{11}\|_{2}\le\xi_{1},$ where \begin{equation*}
\xi_{1} = \left( \frac{1}{2} \left[ 1+\sum_{i=0}^{k-2}\|A_{i}\|_{2}^{2}  +  \sqrt{\left(1+\sum_{i=0}^{k-2}\|A_{i}\|_{2}^{2}\right)^{2}-4\left(\|A_{0}\|_{2}^{2}+\|A_{1}\|_{2}^{2}\right)} ~~\right] \right)^{\frac{1}{2}}.
\end{equation*}
To find $\|M_{12}\|_{2},$ we write the matrix $M_{12}$ as $M_{12}=X+Y,$ where
\[X := 
\begin{bmatrix}
0\\
 0 \\
\vdots \\ I_{n}\\0\\
\end{bmatrix}~~ \mbox{and} \quad Y := 
\begin{bmatrix}
0\\
 0 \\
\vdots \\ 0\\-A_{k-1}\\
\end{bmatrix}. \] 
By direct calculation, we have the following values.\\
$\|X^{H}Y\|_{2}=\|Y^{H}X\|_{2}=0,~\|X^{H}X\|_{2}=1~\mbox{and}~\|Y^{H}Y\|_{2}=\|A_{k-1}\|^{2}_{2}.$\\
Thus,\begin{align*}
    \|M_{12}\|^{2}_{2}=\|M^{H}_{12}M_{12}\|_{2}&=\|(X+Y)^{H}(X+Y)\|_{2}\\&=\|X^{H}Y+Y^{H}X+X^{H}X+Y^{H}Y\|_{2}\\&\le\left( \|X^{H}X\|_{2}+\|Y^{H}Y\|_{2}\right),
\end{align*}
so that
\begin{align*}
    \|M_{12}\|_{2}&\le \left( \|X^{H}X\|_{2}+\|Y^{H}Y\|_{2}\right)^\frac{1}{2}\\&=\left( 1+\|A_{k-1}\|^{2}_{2}\right)^\frac{1}{2}.
\end{align*}
It is easy to obtain $\|M_{21}\|_{2}\le\xi_{2},$ where \begin{equation*}
    \xi_{2}=\left( \sum_{i=0}^{k-2} \|B_{i}\|^{2}\right)^{\frac{1}{2}}.
\end{equation*}
From the above inequalities, we can easily write that $\tilde{C^{2}_{B}}\le B,$ where \[
B = \begin{bmatrix} \xi_{1} & \left( 1+\|A_{k-1}\|^{2}_{2}\right)^\frac{1}{2} \\ \xi_{2}& \|B_{k-1}\|_{2} \end{bmatrix}.
\]
Hence, we obtain $
    \rho_{r}(\tilde{C^{2}_{B}})\le \rho_{r}(B).$
Now from this inequality and Lemma \ref{lemma 3.2}, we have 
\begin{align*}
    \rho_r(C^{2}_{B}) \leq \rho_r(\tilde{C}^{2}_B) &\le \rho_{r}(B)\\&= \frac{1}{2} \left[ \xi_{1} + \|B_{k-1}\|_2 + \sqrt{\left( \xi_{1} - \|B_{k-1}\|_2  \right)^2 + 4 \xi_{2}\left( 1+\|A_{k-1}\|^{2}_{2}\right)^\frac{1}{2} }
\right],
\end{align*}
Consequently, we obtain \begin{equation*}
 \rho_r(C_{L})\le \left( \frac{1}{2} \left[ \xi_{1}+\|B_{k-1}\|_{2}  +\sqrt{(\xi_{1}-\|B_{k-1}\|_{2})^{2}+4\xi_{2} \sqrt{1+\|A_{k-1}\|^{2}_{2}}} \right] \right)^{\frac{1}{2}}
\end{equation*}
\end{proof}
\noindent If we take $n=1$ in (\ref{eq:02}), the matrix polynomial $L(\mu)$ reduces into quaternionic polynomial, say
 \begin{equation}
     p(\mu)=\mu^{k}+a_{k-1}\mu^{k-1}+\dots+a_{1}\mu+a_{0}\label{eq:03}
 \end{equation}
  we can write $b_{i}=a_{k-1}a_{i}-a_{i-1},~0\le i \le k-1, $ with $a_{-1}=0.$ Then we can easily see that $\|A_{k-1}\|^{2}_{2}=|a_{k-1}|^{2},~ \|B_{k-1}\|_{2}=|b_{k-1}|, ~ \sum_{i=0}^{k-2}\|B_{i}\|^{2}_{2}=\sum_{i=0}^{k-2}|b_{i}|_{2}^{2}$ and $ \sum_{i=0}^{k-2}\|A_{i}\|^{2}_{2}=\sum_{i=0}^{k-2}|a_{i}|_{2}^{2}.$\\
   Now, Theorem \ref{theorem 3.5} gives the following corollary for bounds on the zeros of the quaternionic polynomial.

\begin{corollary}
    If $z$ is any zero of quaternionic polynomial (\ref{eq:03}) of degree $n\ge 4,$ then 
    \begin{equation*}
|z| \le \left( \frac{1}{2} \left[  \xi_{1}+|b_{k-1}|  + \sqrt{(\xi_{1}-|b_{k-1}|)^{2}+4\xi_{2} \sqrt{1+|a_{k-1}|^{2}}} \right] \right)^{\frac{1}{2}},
\end{equation*}\\
where \begin{equation*}
\xi_{1} = \left( \frac{1}{2} \left[ 1+\sum_{i=0}^{k-2}|a_{i}|^{2}  +  \sqrt{\left(1+\sum_{i=0}^{k-2}|a_{i}|^{2}\right)^{2}-4\left(|a_{0}|^{2}+|a_{1}|^{2}\right)} ~~\right] \right)^{\frac{1}{2}}
\end{equation*}
and 
$\xi_{2}=\left( \sum_{i=0}^{k-2}|b_{i}|^{2}\right)^\frac{1}{2}.$

\end{corollary}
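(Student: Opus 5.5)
The corollary is the special case $n=1$ of Theorem \ref{theorem 3.5}, so the plan is to reduce to that theorem. Take $n=1$ in (\ref{eq:02}): the coefficients $A_i$ become $1\times 1$ quaternionic matrices $a_i\in\mathbb{H}$, and $L(\xi)$ becomes the polynomial $p(\mu)$ of (\ref{eq:03}).

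\textbf{Step 1: zeros of $p$ are right eigenvalues of $L$.} Let $z$ be a zero of $p$, meaning $z^k+a_{k-1}z^{k-1}+\cdots+a_1z+a_0=0$, with coefficients written on the left. Take $x=1\in\mathbb{H}^1$. Then $A_0x+A_1x z+\cdots+A_kxz^k=p(z)=0$, so by Definition 2.4 $z$ is a right eigenvalue of $L$.

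This is the one point needing care: right eigenvalues are defined with $x$ placed between coefficient and power, and the zeros of $p$ must be evaluated in the same convention. With $x=1$ the two coincide, so there is no noncommutativity issue.

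\textbf{Step 2: translate the norms.} For a $1\times 1$ matrix $[q]$ we have $\|[q]\|_2=|q|$, since $|qx|=|q||x|$. The products $B_i=A_{k-1}A_i-A_{i-1}$ become $b_i=a_{k-1}a_i-a_{i-1}$, with the same order of multiplication. Therefore
\[
\|A_i\|_2=|a_i|,\qquad \|B_i\|_2=|b_i|,\qquad \xi_1,\ \xi_2 \text{ coincide with those in the corollary.}
\]

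\textbf{Step 3: conclude.} Substituting these values into the bound of Theorem \ref{theorem 3.5} gives exactly the stated inequality for $|z|$.

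\textbf{Degree hypothesis.} The condition on the degree (the statement writes $n\ge 4$, but it should refer to the degree $k$) ensures that $k$ is large enough. Then the block structures of $C_L^2$, $M_{11}$, and the matrix $S$ in Lemma \ref{lemma 3.3} are well formed, in particular the sum $\sum_{i=2}^{k-2}$ and the separate terms $A_0,A_1$. So Theorem \ref{theorem 3.5} applies without degenerate cases.

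I expect no real obstacle beyond confirming Step 1 and this degree condition.
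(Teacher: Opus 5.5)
Your proposal is correct and follows the paper's route: set $n=1$ in Theorem \ref{theorem 3.5} and observe that $\|A_i\|_2=|a_i|$ and $\|B_i\|_2=|b_i|$, with $b_i=a_{k-1}a_i-a_{i-1}$ keeping the same order of multiplication. You also fill in two details the paper leaves implicit. First, you check explicitly that a zero $z$ of $p$ is a right eigenvalue of the $1\times 1$ polynomial, by taking $x=1$. Second, you note that the degree hypothesis refers to $k$, not $n$.
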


\begin{theorem}{\label{theorem 3.6}}
     For every right eigenvalue $\xi$ of the quaternionic monic matrix polynomial $L(\xi)$ the following inequality holds:\\
    \begin{equation*}
|\xi| \le \left( 1+\frac{1}{2} \left[ \beta^{2}_{1} + \beta^{2}_{2} +\sqrt{(\beta^{2}_{1}+\beta^{2}_{2})^{2}+4\left(2\beta_{1}\beta_{2} + 1 \right) }\right] \right)^{\frac{1}{4}},
\end{equation*}\\
where \begin{multline*}
 \beta_{1}=\left( \frac{1}{2} \left[ \sum_{i=0}^{k-3}\left(\|A_{i}\|^{2}_{2}+\|B_{i}\|_{2}^{2}\right)+ \right. \right. \\
\left. \left.\sqrt{ \left( \sum_{i=0}^{k-3}\left(\|A_{i}\|^{2}_{2}-\|B_{i}\|_{2}^{2} \right)     \right)^{2}            +4\left|\left|\sum_{i=0}^{k-3}A_{i}{B_{i}^{H}}\right|\right|_{2}^{2} }~~\right]  \right)^{\frac{1}{2}},
\end{multline*}
\begin{multline*} 
\beta_{2}=\left( \frac{1}{2} \left[ \sum_{i=k-2}^{k-1}\left(\|A_{i}\|^{2}_{2}+\|B_{i}\|_{2}^{2}\right)+ \right. \right. \\
\left. \left. \sqrt{ \left( \sum_{i=k-2}^{k-1}\left(\|A_{i}\|^{2}_{2}-\|B_{i}\|_{2}^{2} \right) \right)^{2} +4\left\|\sum_{i=k-2}^{k-1}A_{i}{B_{i}^{H}}\right\|_{2}^{2} }~~\right] \right)^{\frac{1}{2}}. 
\end{multline*}

\end{theorem}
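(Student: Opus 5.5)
The plan is to bound $|\xi|^4$ through the square of the companion matrix. If $\xi$ is a right eigenvalue of $L$, then $\xi$ is a right eigenvalue of $C_L$. So $C_Lx=x\xi$ for some $x\neq 0$, hence $C_L^2x=x\xi^2$, and
$$|\xi|^2\le \rho_r(C_L^2)\le \|C_L^2\|_2 .$$
By Lemma \ref{lemma 3.1}, $\|C_L^2\|_2^2=\|(C_L^2)^HC_L^2\|_2$, and therefore
$$|\xi|^4\le \|(C_L^2)^HC_L^2\|_2 .$$
It then remains to bound this Gram matrix using Lemma \ref{lemma 3.2}.

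\textbf{Splitting $C_L^2$ by rows.} Write $C_L^2=U+V$. Here $U$ keeps the first $k-2$ block rows (the shifted identities) and is zero elsewhere. $V$ keeps the last two block rows
$$\begin{bmatrix}-A_0&\cdots&-A_{k-1}\\ B_0&\cdots&B_{k-1}\end{bmatrix}$$
and is zero elsewhere. Since $U$ and $V$ have disjoint row supports, $U^HV=V^HU=0$, so
$$(C_L^2)^HC_L^2=U^HU+V^HV .$$
$U^HU$ is a diagonal projection, with $\|U^HU\|_2=1$. Split the columns of $V$ as $V=[V_1\ V_2]$, where $V_1$ holds the block columns $0,\dots,k-3$ and $V_2$ the block columns $k-2,k-1$.

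\textbf{Computing $\beta_1$ and $\beta_2$.} We have
$$V_1V_1^H=\begin{bmatrix}\sum_{i\le k-3}A_iA_i^H & -\sum_{i\le k-3} A_iB_i^H\\ -\sum_{i\le k-3} B_iA_i^H & \sum_{i\le k-3} B_iB_i^H\end{bmatrix}.$$
Apply Lemma \ref{lemma 3.2} to this $2\times 2$ block matrix, bounding the diagonal blocks by $\sum\|A_i\|_2^2$ and $\sum\|B_i\|_2^2$. Evaluating the spectral radius of the resulting nonnegative $2\times2$ matrix gives
$$\|V_1\|_2^2=\rho_r(V_1V_1^H)\le\beta_1^2 .$$
In the same way $\|V_2\|_2^2\le\beta_2^2$.

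\textbf{Assembling the bound.} Partition $U^HU+V^HV$ conformally with $[V_1\ V_2]$. Its diagonal blocks have norms at most $1+\beta_1^2$ and $1+\beta_2^2$ (using $\|U^HU\|_2\le 1$). Its off-diagonal blocks $V_1^HV_2$ and $V_2^HV_1$ have norms at most $\beta_1\beta_2$, which is certainly $\le \beta_1\beta_2+1$. Lemma \ref{lemma 3.2} then bounds the radius by $\rho_r$ of a nonnegative $2\times 2$ matrix. The spectral radius of a nonnegative matrix is monotone in its entries, so it is enough to bound
$$\rho_r\!\begin{bmatrix}1+\beta_1^2&\beta_1\beta_2+1\\ \beta_1\beta_2+1&1+\beta_2^2\end{bmatrix},$$
and compare with the theorem's expression. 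The needed inequality is
$$(\beta_1^2-\beta_2^2)^2+4(\beta_1\beta_2+1)^2\le(\beta_1^2+\beta_2^2)^2+4(2\beta_1\beta_2+1).$$
This holds as an equality after expanding, which yields exactly the stated bound; taking fourth roots finishes the proof.

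\textbf{Expected obstacle.} The delicate step is this last bookkeeping: making sure the constant $1$ from $U^HU$ enters both the diagonal shift and the off-diagonal entries consistently with the stated formula. If a diagonal shift of $1$ does not fit, I would instead use the triangle inequality $\|U^HU+V^HV\|_2\le 1+\|V^HV\|_2$. I would then bound $\|V^HV\|_2=\|V\|_2^2$ by $\rho_r$ of $\bigl[\begin{smallmatrix}\beta_1^2&\beta_1\beta_2+1\\ \beta_1\beta_2+1&\beta_2^2\end{smallmatrix}\bigr]$, again using monotonicity, since the true off-diagonal value $\beta_1\beta_2$ only makes the bound smaller.
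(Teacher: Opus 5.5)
Your proof is correct. It uses the same ingredients as the paper but assembles them differently, and the different assembly is actually sharper.

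The paper never forms the Gram matrix. It partitions $C_L^2$ itself into $N_{11},N_{12}$ (the top $k-2$ block rows, split between block columns $k-3$ and $k-2$) and $N_{21},N_{22}$, which are your $V_1,V_2$. It bounds $\|N_{21}\|_2\le\beta_1$ and $\|N_{22}\|_2\le\beta_2$ exactly as you do via $V_iV_i^H$, and uses $\|N_{11}\|_2,\|N_{12}\|_2\le 1$. It then applies the norm half of Lemma 3.2 to get $\|C_L^2\|_2\le\bigl\|\bigl[\begin{smallmatrix}1&1\\ \beta_1&\beta_2\end{smallmatrix}\bigr]\bigr\|_2$. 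The theorem's expression is precisely the square of that norm, i.e. the top eigenvalue of $\bigl[\begin{smallmatrix}1+\beta_1^2&1+\beta_1\beta_2\\ 1+\beta_1\beta_2&1+\beta_2^2\end{smallmatrix}\bigr]$. That is the same matrix you arrive at.

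The extra $1$ off the diagonal is what the paper's route pays for the cross term $N_{11}^HN_{12}$. It is charged as $\|N_{11}\|_2\|N_{12}\|_2=1$, although that term is actually zero. Your row splitting exposes this. Because $U^HV=0$ and $U^HU=\mathrm{diag}(0,0,I_n,\dots,I_n)$ is block diagonal, the off-diagonal blocks of $(C_L^2)^HC_L^2$ are only $V_1^HV_2$ and its adjoint. So before you deliberately enlarge $\beta_1\beta_2$ to $\beta_1\beta_2+1$, you already have
\[
|\xi|^4\le\rho_r\begin{bmatrix}1+\beta_1^2&\beta_1\beta_2\\ \beta_1\beta_2&1+\beta_2^2\end{bmatrix}=1+\beta_1^2+\beta_2^2 .
\]
This is strictly smaller than the stated bound. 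It also follows at once from $\|C_L^2\|_2^2\le\|U\|_2^2+\|V\|_2^2$ and $\|V\|_2^2\le\|V_1\|_2^2+\|V_2\|_2^2$.

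The worry in your last paragraph is therefore unfounded: the diagonal shift by $1$ fits, and both of your variants are valid. The one step you leave implicit, as the paper also does, is that $\|H\|_2=\rho_r(H)$ for the Hermitian positive semidefinite matrices $V_1V_1^H$ and $(C_L^2)^HC_L^2$.
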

\begin{proof}
   Let
\[
C^{2}_{B} = \begin{bmatrix} N_{11} & N_{12} \\ N_{21} & N_{22} \end{bmatrix}
\] be the block matrix of $C^{2}_{L}$
where,
\[
N_{11} = 
\begin{bmatrix}
0 & 0 & I_n & \cdots & 0 \\
\vdots & \vdots  & \vdots &  &\vdots\\
0 & 0 & 0 & \cdots & I_n \\
0 & 0 & 0 & \cdots & 0 \\
0 & 0 & 0 & \cdots & 0 \\
\end{bmatrix}, \quad N_{12} = 
\begin{bmatrix}
0 &0 \\
\vdots&\vdots \\
I_n & 0\\ 0 & I_{n}
\end{bmatrix},
\]

\[
N_{21} = 
\begin{bmatrix}
-A_0 & -A_1 & \cdots & -A_{k-3}\\
B_0 & B_1 & \cdots & B_{k-3}
\end{bmatrix}, \quad N_{22} = \begin{bmatrix}
    -A_{k-2}&-A_{k-1}\\B_{k-2}&B_{k-1}\end{bmatrix}
\]
Since, $\|N_{11}\|_{2}=\|N_{12}\|_{2}=1.$
To find $\|N_{21}\|_{2},$ we compute  \[
A=N_{21}N_{21}^{H} = 
\begin{bmatrix}
\sum_{i=0}^{k-3} A_{i}A_{i}^{H}& -\sum_{i=0}^{k-3}A_{i}A_{i}^{H}  \\
 -\sum_{i=0}^{k-3} B_{i}A_{i}^{H}& \sum_{i=0}^{k-3} B_{i}B_{i}^{H}
\end{bmatrix}. \]
Thus, \[\tilde{A}= 
\begin{bmatrix}
\|\sum_{i=0}^{k-3} A_{i}A_{i}^{H}\|_{2}& -\|\sum_{i=0}^{k-3}A_{i}A_{i}^{H}\|_{2}  \\
 -\|\sum_{i=0}^{k-3} B_{i}A_{i}^{H}\|_{2}& \|\sum_{i=0}^{k-3} B_{i}B_{i}^{H}\|_{2}
\end{bmatrix}. \] By applying Lemma \ref{lemma 3.2}, we obtain 

\begin{align*}
\rho_{r}(A) &\le \rho_{r}(\tilde{A}) 
= \frac{1}{2}\Biggl[ \biggl( \Bigl\|\sum_{i=0}^{k-3} A_i A_i^{H}\Bigr\|_2 + \Bigl\|\sum_{i=0}^{k-3} B_i B_i^{H}\Bigr\|_2 \biggr) \\
&~~~~~\qquad + \sqrt{ \biggl( \Bigl\|\sum_{i=0}^{k-3} A_i A_i^{H}\Bigr\|_2 - \Bigl\|\sum_{i=0}^{k-3} B_i B_i^{H}\Bigr\|_2 \biggr)^2 + 4\Bigl\|\sum_{i=0}^{k-2} A_i B_i^{H}\Bigr\|_2 \Bigl\|\sum_{i=0}^{k-2} B_i A_i^{H}\Bigr\|_2 } \Biggr] \\
&~~~~~~~~~~~~\le \frac{1}{2}\Biggl[ \biggl( \sum_{i=0}^{k-3} \|A_i A_i^{H}\|_2 + \sum_{i=0}^{k-3} \|B_i B_i^{H}\|_2 \biggr) \\
&~~~~~\qquad + \sqrt{ \biggl( \sum_{i=0}^{k-3} \|A_i A_i^{H}\|_2 - \sum_{i=0}^{k-3} \|B_i B_i^{H}\|_2 \biggr)^2 + 4\Bigl\|\sum_{i=0}^{k-2} A_i B_i^{H}\Bigr\|_2 \Bigl\|\sum_{i=0}^{k-2} B_i A_i^{H}\Bigr\|_2 } \Biggr] \\
&~~~~~~~~~~~= \frac{1}{2}\Biggl[ \biggl( \sum_{i=0}^{k-3} \|A_i\|_2^2 + \sum_{i=0}^{k-3} \|B_i\|_2^2 \biggr) \\
&~~~~~~\qquad + \sqrt{ \biggl( \sum_{i=0}^{k-3} \|A_i\|_2^2 - \sum_{i=0}^{k-3} \|B_i\|_2^2 \biggr)^2 + 4\Bigl\|\sum_{i=0}^{k-2} A_i B_i^{H}\Bigr\|_2 \Bigl\|\sum_{i=0}^{k-2} A_i B_i^{H}\Bigr\|_2 } \Biggr] \\
&~~~~~~~~~~~= \frac{1}{2}\Biggl[ \biggl( \sum_{i=0}^{k-3} \|A_i\|_2^2 + \sum_{i=0}^{k-3} \|B_i\|_2^2 \biggr) \\
&~~~~~~\qquad + \sqrt{ \biggl( \sum_{i=0}^{k-3} \|A_i\|_2^2 - \sum_{i=0}^{k-3} \|B_i\|_2^2 \biggr)^2 + 4\Bigl\|\sum_{i=0}^{k-2} A_i B_i^{H}\Bigr\|_2^2 } \Biggr].
\end{align*}
 From the above inequalities, we can easily conclude 
     $\|N_{21}\|_{2}\le \beta_{1},$ \\
where  \begin{multline*}
 \beta_{1}=\left( \frac{1}{2} \left[ \sum_{i=0}^{k-3}\left(\|A_{i}\|^{2}_{2}+\|B_{i}\|_{2}^{2}\right)+ \right. \right. \\
\left. \left.\sqrt{ \left( \sum_{i=0}^{k-3}\left(\|A_{i}\|^{2}_{2}-\|B_{i}\|_{2}^{2} \right)     \right)^{2}            +4\left|\left|\sum_{i=0}^{k-3}A_{i}{B_{i}^{H}}\right|\right|_{2}^{2} }~~\right]  \right)^{\frac{1}{2}}.
\end{multline*}
Similarly, we obtain $\|N_{22}\|_{2}\le \beta_{2},$ where
\begin{multline*} 
\beta_{2}=\left( \frac{1}{2} \left[ \sum_{i=k-2}^{k-1}\left(\|A_{i}\|^{2}_{2}+\|B_{i}\|_{2}^{2}\right)+ \right. \right. \\
\left. \left. \sqrt{ \left( \sum_{i=k-2}^{k-1}\left(\|A_{i}\|^{2}_{2}-\|B_{i}\|_{2}^{2} \right) \right)^{2} +4\left\|\sum_{i=k-2}^{k-1}A_{i}{B_{i}^{H}}\right\|_{2}^{2} }~~\right] \right)^{\frac{1}{2}}. 
\end{multline*}
Now using Lemma {\ref{lemma 3.2}}, we have 

$$ \|C_{B}^{2}\|_2 \leq \|\tilde{C_{B}^{2}}\|_2. $$
This implies
\begin{align*}
   \|C_{B}^{2}\|^2_{2} &\le \left\| \begin{bmatrix} \|N_{11}\|_{2} &\|N_{12}\|_{2}\\ \|N_{21}\|_{2} & \|N_{22}\|_{2}\end{bmatrix} \right\|^2_{2} \\&=\left\| \begin{bmatrix} 1 &1\\ \|N_{21}\|_{2} & \|N_{22}\|_{2} \end{bmatrix} \right\|^2_{2}\\&\le \left\|\begin{bmatrix} 1 &1\\ \beta_{1} & \beta_{2} \end{bmatrix}\right\|^2_{2}.
\end{align*}
Thus, 
$$ \|C_{B}^{2}\|_{2} \le \left( 1+\frac{1}{2} \left[ \beta^{2}_{1} + \beta^{2}_{2} +\sqrt{(\beta^{2}_{1}+\beta^{2}_{2})^{2}+4\left(2\beta_{1}\beta_{2} + 1 \right) }\right] \right)^{\frac{1}{2}}.$$

\noindent Hence every right eigenvalues satisfies $$|\xi| \le \left( 1+\frac{1}{2} \left[ \beta^{2}_{1} + \beta^{2}_{2} +\sqrt{(\beta^{2}_{1}
+\beta^{2}_{2})^{2}+4\left(2\beta_{1}\beta_{2} + 1 \right) }\right] \right)^{\frac{1}{4}},$$
\end{proof}
\noindent 
If we take $n=1$ in (\ref{eq:03}), we can write $b_i = a_{k-1}a_i - a_{i-1}$, $0 \leq i \leq k-1~$with~ $a_{-1} = 0$. Then we can easily see that $\|A_{i}\|_{2}=|a_{i}|~\mbox{and}~\|B_{i}\|_{2}=|b_{i}|. $ \\Now Theorem \ref{theorem 3.6} gives the following corollary for bounds on the zeros of the quaternionic polynomial.

  \begin{corollary}
      If $z$ is any zero of quaternionic polynomial (\ref{eq:03}) of degree $n\ge 4$, then\\
    \begin{equation*}
|z| \le \left( 1+\frac{1}{2} \left[ \beta^{2}_{1} + \beta^{2}_{2} +\sqrt{(\beta^{2}_{1}
+\beta^{2}_{2})^{2}+4\left(2\beta_{1}\beta_{2} + 1 \right) }\right] \right)^{\frac{1}{4}},
\end{equation*}\\
where \begin{equation*}
 \beta_{1}=\left( \frac{1}{2} \left[ \sum_{i=0}^{k-3}\left(|a_{i}|^{2}+|b_{i}|^{2}\right)+\sqrt{ \left( \sum_{i=0}^{k-3}\left(|a_{i}|^{2}-|b_{i}|^{2} \right)     \right)^{2}            +4\left|\sum_{i=0}^{k-3}a_{i}\overline{b_{i}}\right|^{2} }~~\right]  \right)^{\frac{1}{2}}
\end{equation*}
and
\begin{equation*}
    \beta_{2}=\left( \frac{1}{2} \left[ \sum_{i=k-2}^{k-1}\left(|a_{i}|^{2}+|b_{i}|^{2}\right)+\sqrt{ \left( \sum_{i=k-2}^{k-1}\left|a_{i}|^{2}-|b_{i}|^{2} \right)     \right)^{2}            +4\left|\sum_{i=k-2}^{k-1}a_{i}\overline{b_{i}}\right|^{2} }~~\right]  \right)^{\frac{1}{2}}.
\end{equation*}
  \end{corollary}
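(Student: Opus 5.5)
The corollary is the scalar case $n=1$ of Theorem \ref{theorem 3.6}, so I will simply specialize that theorem. With $n=1$ the monic matrix polynomial (\ref{eq:02}) becomes the quaternionic polynomial (\ref{eq:03}), with $A_i=a_i\in\mathbb{H}$ viewed as $1\times1$ quaternionic matrices. The quantities $B_i=A_{k-1}A_i-A_{i-1}$ become $b_i=a_{k-1}a_i-a_{i-1}$, with $a_{-1}=0$. The degree restriction $k\ge 4$ (written $n\ge 4$ in the statement) makes the partition of $C_L^2$ used in the proof of Theorem \ref{theorem 3.6} meaningful: $N_{11}$ is nonempty and the index range $0,\dots,k-3$ contains at least two terms.

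The first step is to identify the zeros of $p$ with right eigenvalues of $L$. If $p(z)=0$ with $z\in\mathbb{H}$, take $x=1\in\mathbb{H}^1$. Then
\[
a_0x+a_1xz+\cdots+xz^k=a_0+a_1z+\cdots+z^k=p(z)=0,
\]
where $p(z)$ is read with the variable on the right of the coefficients, consistent with Definition 2.4. So $z$ is a right eigenvalue of $L$ in the sense of Definition 2.4, and Theorem \ref{theorem 3.6} bounds $|z|$. I would also note that $z$ is a right eigenvalue of $C_L$, with eigenvector $(1,z,\dots,z^{k-1})^T$, which is exactly what the proof of Theorem \ref{theorem 3.6} actually bounds.

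The second step is to translate each norm. For a $1\times1$ quaternionic matrix $q$ we have $\|q\|_2=|q|$, since $|qx|=|q||x|$. Hence $\|A_i\|_2^2=|a_i|^2$ and $\|B_i\|_2^2=|b_i|^2$. Also $A_iB_i^H=a_i\overline{b_i}$, so
\[
\Bigl\|\sum A_iB_i^H\Bigr\|_2^2=\Bigl|\sum a_i\overline{b_i}\Bigr|^2
\]
over both index ranges, $0,\dots,k-3$ and $k-2,k-1$. Substituting these into $\beta_1$ and $\beta_2$ gives exactly the scalar expressions in the corollary, and the final bound with exponent $1/4$ transfers verbatim.

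The only point needing care is the first step: the variable commutes formally with the coefficients, but evaluation at a quaternion places $z$ on the right. I would state explicitly that "zero" means a right zero, $\sum a_iz^i=0$. Everything else is direct substitution.
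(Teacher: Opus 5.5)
Your proposal is correct and follows the paper's route: specialize Theorem \ref{theorem 3.6} to $n=1$, where $\|A_i\|_2=|a_i|$, $\|B_i\|_2=|b_i|$ and $A_iB_i^H=a_i\overline{b_i}$. Beyond the paper, you explicitly check that a zero with $\sum a_i z^i=0$ is a right eigenvalue (take $x=1$) and you read the degree condition correctly as $k\ge 4$; the paper leaves both implicit.
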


\begin{theorem}{\label{theorem 3.7}}
   For every right eigenvalue $\xi$ of the quaternionic monic matrix polynomial $L(\xi)$ the following inequality holds:\\
    \begin{equation*}
|\mu| \le \left( \frac{1}{2} \left[ \eta _{1} + \eta_{2} +\sqrt{(\eta_{1}-\eta_{2})^{2}+4\tau_{1}\tau_{2} } \right] \right)^{\frac{1}{3}},
\end{equation*}\\
where 
\begin{equation*}
    \eta_{1}=\left( \frac{1}{2} \left[ 1+ \sum_{i=0}^{k-3}\|A_{i}\|^{2}_{2} 
    +\sqrt{\left( 1+ \sum_{i=0}^{k-3}\|A_{i}\|^{2}_{2}\right)^{2} -4\left(\|A_{0}\|^{2}_{2} +\|A_{1}\|^{2}_{2}+\|A_{2}\|^{2}_{2}\right) }\right] \right)^\frac{1}{2},
\end{equation*}
\begin{equation*}
    \eta_{2}=\left( \frac{1}{2} \left[   \sum_{i=k-2}^{k-1} \left(\|B_{i}\|^{2}_{2}+\|C_{i}\|^{2}_{2} \right)    +\sqrt{\sum_{j=k-2}^{k-1} \left(\|B_{i}\|^{2}_{2}-\|C_{i}\|^{2}_{2} \right)^{2}+4\left|\left|\sum_{i=k-2}^{k-1}B_{i}{C_{i}^
    {H}} \right|\right|_{2}^{2}}~                   \right] \right)^{\frac{1}{2}},  
\end{equation*}

\begin{equation*}
\tau_{2}=\left( \frac{1}{2} \left[   \sum_{i=0}^{k-3} \left(\|B_{i}\|^{2}_{2}+\|C_{i}\|^{2}_{2} \right)    +\sqrt{ \left(\sum_{i=0}^{k-3}\|B_{i}\|^{2}_{2}-\|C_{i}\|^{2}_{2} \right)^{2}+4\left|\left|\sum_{i=0}^{k-3}B_{i}{C_{i}^{H}} \right|\right|_{2}^{2}}~                   \right] \right)^{\frac{1}{2}}, 
\end{equation*}
 and 
 \begin{equation*}
    \tau_{1}=\sqrt{\|A_{k-1}\|^{2}_{2}+\|A_{k-2}\|^{2}_{2}+1}.
\end{equation*}
\end{theorem}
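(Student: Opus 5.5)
We follow the scheme of Theorems \ref{theorem 3.5} and \ref{theorem 3.6}, now applied to the cube $C_L^3$ with the block partition $C_B^3=\begin{bmatrix} K_{11} & K_{12}\\ K_{21} & K_{22}\end{bmatrix}$ fixed in Section 3. The last two block columns and the last two block rows form the second index set.

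The first step rests on a simple observation. If $C_Lx=x\mu$ with $x\neq 0$, then $C_L^3x=x\mu^3$. Hence every right eigenvalue $\mu$ of $L$ satisfies $|\mu|^3\le \rho_r(C_L^3)$. By Lemma \ref{lemma 3.2},
\[
\rho_r(C_L^3)\le \rho_r(\tilde{C}_B^3),\qquad \tilde{C}_B^3=\begin{bmatrix}\|K_{11}\|_2 & \|K_{12}\|_2\\ \|K_{21}\|_2 & \|K_{22}\|_2\end{bmatrix}.
\]
Since $\tilde{C}_B^3$ is entrywise nonnegative, its spectral radius is monotone in the entries (Perron--Frobenius). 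It therefore suffices to produce the entrywise bounds
\[
\|K_{11}\|_2\le\eta_1,\qquad \|K_{12}\|_2\le\tau_1,\qquad \|K_{21}\|_2\le\tau_2,\qquad \|K_{22}\|_2\le\eta_2 .
\]
The spectral radius of $\begin{bmatrix}\eta_1&\tau_1\\ \tau_2&\eta_2\end{bmatrix}$ is exactly $\frac12\bigl[\eta_1+\eta_2+\sqrt{(\eta_1-\eta_2)^2+4\tau_1\tau_2}\bigr]$. Taking cube roots then gives the claim.

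The four entry bounds are obtained as follows.
\begin{itemize}
\item $K_{11}$ has exactly the shape of the matrix $N$ in Lemma \ref{lemma 3.4}: shifted identity blocks, two zero block rows, and last row $-A_0,\dots,-A_{k-3}$. That lemma gives $\|K_{11}\|_2\le\eta_1$ directly.
\item For $K_{12}$, we mimic the $X+Y$ splitting from the proof of Theorem \ref{theorem 3.5}. We compute $K_{12}^HK_{12}=\begin{bmatrix} I+A_{k-2}^HA_{k-2} & A_{k-2}^HA_{k-1}\\ A_{k-1}^HA_{k-2} & I+A_{k-1}^HA_{k-1}\end{bmatrix}$. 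Its norm is bounded by $1+\|[A_{k-2}\ A_{k-1}]\|_2^2\le 1+\|A_{k-2}\|_2^2+\|A_{k-1}\|_2^2$, which yields $\tau_1$.
\item For $K_{21}$ and $K_{22}$, we repeat the computation used for $N_{21}$ in Theorem \ref{theorem 3.6}, with the pair $(A_i,B_i)$ replaced by $(B_i,C_i)$. We form
\[
K_{21}K_{21}^H=\begin{bmatrix}\sum_{i=0}^{k-3} B_iB_i^H & \sum_{i=0}^{k-3} B_iC_i^H\\ \sum_{i=0}^{k-3} C_iB_i^H & \sum_{i=0}^{k-3} C_iC_i^H\end{bmatrix}.
\]
We use Lemma \ref{lemma 3.1} to write $\|K_{21}\|_2^2=\rho_r(K_{21}K_{21}^H)$. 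Lemma \ref{lemma 3.2} then bounds this by the spectral radius of the $2\times2$ matrix of block norms. The triangle inequality gives $\|\sum B_iB_i^H\|_2\le\sum\|B_i\|_2^2$, and $\|\sum C_iB_i^H\|_2=\|\sum B_iC_i^H\|_2$. This produces $\tau_2$. The same argument over $i\in\{k-2,k-1\}$ produces $\eta_2$.
\end{itemize}

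The main obstacle is the monotonicity step inside the $2\times 2$ closed form. The expression $\frac12[a+d+\sqrt{(a-d)^2+4bc}]$ is increasing in $b,c\ge0$. However, replacing $a,d$ by larger sums $\sum\|B_i\|_2^2$, $\sum\|C_i\|_2^2$ changes the term $(a-d)^2$, and that term is not monotone. The remedy is to argue at the matrix level rather than on the formula. We first dominate the nonnegative matrix $\begin{bmatrix} a&b\\ b&d\end{bmatrix}$ entrywise by $\begin{bmatrix}\sum\|B_i\|_2^2 & b\\ b&\sum\|C_i\|_2^2\end{bmatrix}$, and only then evaluate the closed form; Perron monotonicity justifies this order. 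The same care is needed in the final combination, where all four entries of $\tilde{C}_B^3$ are nonnegative, so the argument applies there too.
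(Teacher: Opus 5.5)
Your proposal is correct and is essentially the paper's proof: Lemma \ref{lemma 3.2} applied to the partition $K_{11},K_{12},K_{21},K_{22}$ of $C_L^3$, Lemma \ref{lemma 3.4} for $K_{11}$, Gram-matrix estimates (as for $N_{21}$ in Theorem \ref{theorem 3.6}) for the other three blocks, and a cube root at the end. You actually supply the computations the paper dismisses as ``tedious,'' and you correctly state them as inequalities where the paper writes equalities. The monotonicity ``obstacle'' you flag is not real: $\tfrac12\bigl[a+d+\sqrt{(a-d)^2+4bc}\bigr]$ has partial derivatives $\tfrac12\bigl[1\pm (a-d)/\sqrt{(a-d)^2+4bc}\bigr]\ge 0$ in $a$ and $d$ whenever $bc\ge 0$, but your Perron-monotonicity argument handles it equally well.
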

\begin{proof}
 Let 
\[
C^{3}_{B} = \begin{bmatrix} K_{11} & K_{12} \\ K_{21} & K_{22} \end{bmatrix}.
\] be the block matrix of $C^{3}_{L}$
where,
\[
K_{11} = 
\begin{bmatrix}
0 & 0 & 0& I_n & \cdots & 0 \\
\vdots & \vdots& \vdots  & \vdots &  &\vdots\\
0 & 0 & 0 & 0&\cdots & I_n \\
0 & 0 & 0 & 0&\cdots & 0 \\
0 & 0 & 0 & 0&\cdots & 0 \\
-A_0 & -A_1 & -A_2 & -A_{3}&\cdots & -A_{k-3} \\
\end{bmatrix}, \quad K_{12} = 
\begin{bmatrix}
0 & 0\\
\vdots& \vdots \\
0&0\\
I_{n}&0\\
0&I_{n}\\
-A_{k-2}&-A_{k-1}

\end{bmatrix},
\]

\[
K_{21} = 
\begin{bmatrix}
B_0 & B_1 & \cdots & B_{k-3}\\
C_0 & C_1 & \cdots & C_{k-3}
\end{bmatrix},  \quad K_{22} = 
\begin{bmatrix}
B_{k-2}&B_{k-1} \\
C_{k-2}&C_{k-1}\\ 
\end{bmatrix}.
\]
Using Lemma \ref{lemma 3.2}, we have \\
\begin{align*}
    \rho_{r}(C_{B}^{3}) &\le \rho_{r}\left( \begin{bmatrix}
    \|K_{11}\|_{2} & \|K_{12}\|_{2}\\
    \|K_{21}\|_{2} & \|K_{22}\|_{2}
\end{bmatrix}\right)\\& = \frac{1}{2} \left[ \|K_{11}\|_{2} + \|K_{22}\|_2 + \sqrt{\left( \|K_{11}\|_{2} - \|K_{22}\|_2  \right)^2 + 4 \|K_{12}\|_{2}\|K_{21}\|_{2} }
\right].
\end{align*}
Using Lemma \ref{lemma 3.4}, we have $\|K_{11}\|_{2}=\eta_{1},$ 
where \begin{equation*}
    \eta_{1}=\left( \frac{1}{2} \left[ 1+ \sum_{i=0}^{k-3}\|A_{i}\|^{2}_{2} 
    +\sqrt{\left( 1+ \sum_{i=0}^{k-3}\|A_{i}\|^{2}_{2}\right)^{2} -4\left(\|A_{0}\|^{2}_{2} +\|A_{1}\|^{2}_{2}+\|A_{2}\|^{2}_{2}\right) }\right] \right)^\frac{1}{2}.
\end{equation*}
By tedious computation one can show that  $\|K_{12}\|_{2}=\tau_{1}, \|K_{21}\|_{2}=\tau_{2} $ and $\|K_{22}\|_{2}=\eta_{2},$ where
\begin{equation*}
    \eta_{2}=\left( \frac{1}{2} \left[   \sum_{i=k-2}^{k-1} \left(\|B_{i}\|^{2}_{2}+\|C_{i}\|^{2}_{2} \right)    +\sqrt{\sum_{j=k-2}^{k-1} \left(\|B_{i}\|^{2}_{2}-\|C_{i}\|^{2}_{2} \right)^{2}+4\left|\left|\sum_{i=k-2}^{k-1}B_{i}{C_{i}^
    {H}} \right|\right|_{2}^{2}}~                   \right] \right)^{\frac{1}{2}},  
\end{equation*}

\begin{equation*}
\tau_{2}=\left( \frac{1}{2} \left[   \sum_{i=0}^{k-3} \left(\|B_{i}\|^{2}_{2}+\|C_{i}\|^{2}_{2} \right)    +\sqrt{ \left(\sum_{i=0}^{k-3}\|B_{i}\|^{2}_{2}-\|C_{i}\|^{2}_{2} \right)^{2}+4\left|\left|\sum_{i=0}^{k-3}B_{i}{C_{i}^{H}} \right|\right|_{2}^{2}}~                   \right] \right)^{\frac{1}{2}}, 
\end{equation*}
 and 
 \begin{equation*}
    \tau_{1}=\sqrt{\|A_{k-1}\|^{2}_{2}+\|A_{k-2}\|^{2}_{2}+1}
\end{equation*}
It follows that 
\begin{equation*}
|\mu| \le \left( \frac{1}{2} \left[ \eta _{1} + \eta_{2} +\sqrt{(\eta_{1}-\eta_{2})^{2}+4\tau_{1}\tau_{2} } \right] \right)^{\frac{1}{3}}.
\end{equation*}
\end{proof}

\noindent For $n=1$ in (\ref{eq:03}), we can write $b_i = a_{k-1}a_i - a_{i-1}$, $0 \leq i \leq k-1,~ c_{i}=-a_{k-1}b_{i}+a_{k-2}a_{i}-a_{i-2},~$with~ $a_{-1}=a_{-2} = 0$. Then one can easily see that $\|A_{i}\|_{2}=|a_{i}|,~\|B_{i}\|_{2}=|b_{i}| ~\mbox{and}~\|C_{i}\|_{2}=|c_{i}|.$ \\Now Theorem \ref{theorem 3.7} gives the following corollary for bounds on the zeros of the quaternionic polynomial.
\begin{corollary}
    If $z$ is any zero of polynomial (\ref{eq:03}) of degree $n\ge 5,$ then 
    \begin{equation*}
|z| \le \left( \frac{1}{2} \left[ \eta _{1} + \eta_{2} +\sqrt{(\eta_{1}-\eta_{2})^{2}+4\tau_{1}\tau_{2} } \right] \right)^{\frac{1}{3}},
\end{equation*}
where  \begin{equation*}
    \eta_{1}=\left( \frac{1}{2} \left[   \sum_{i=k-2}^{k-1} \left(|b_{i}|^{2}+|c_{i}|^{2} \right)    +\sqrt{\sum_{i=k-2}^{k-1} \left(|b_{i}|^{2}-|c_{j}|^{2} \right)^{2}+4\left|\sum_{i=k-2}^{k-1}b_{i}\overline{c_{i}} \right|^{2}}~                   \right] \right)^{\frac{1}{2}},  
\end{equation*}
\begin{equation*}
    \eta_{2}=\left( \frac{1}{2} \left[ 1+ \sum_{i=0}^{k-3}|a_{i}|^{2} 
    +\sqrt{\left( 1+ \sum_{i=0}^{k-3}|a_{i}|^{2}\right)^{2} -4\left(|a_{0}|^{2} +|a_{1}|^{2}+|a_{2}|^{2}\right) }\right] \right)^\frac{1}{2},
\end{equation*}

\begin{equation*}
\tau_{2}=\left( \frac{1}{2} \left[   \sum_{i=0}^{k-3} \left(|b_{i}|^{2}+|c_{i}|^{2} \right)    +\sqrt{ \left(\sum_{i=0}^{k-3}|b_{i}|^{2}-|c_{i}|^{2} \right)^{2}+4\left|\sum_{i=0}^{k-3}b_{i}\overline{c_{i}} \right|^{2}}~                   \right] \right)^{\frac{1}{2}}, 
\end{equation*}
 and 
 \begin{equation*}
    \tau_{1}=\sqrt{|a_{k-1}|^{2}+|a_{k-2}|^{2}+1}.
\end{equation*}
\end{corollary}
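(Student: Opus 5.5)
This corollary is the case $n=1$ of Theorem \ref{theorem 3.7}, so the plan is to specialize the theorem rather than redo the companion-matrix argument.

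\textbf{Step 1: move from zeros to right eigenvalues.} Take $n=1$ in (\ref{eq:02}), which gives the scalar polynomial (\ref{eq:03}) with $A_i=a_i\in\mathbb{H}$. A zero $z$ of $p$ satisfies $\sum a_i z^i=0$. With $x=1$ this is exactly $\sum A_i x z^i=0$, so $z$ is a right eigenvalue of $L$ in the sense of Definition 2.4. Theorem \ref{theorem 3.7} then bounds $|z|$.

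\textbf{Step 2: substitute scalars into the theorem's quantities.} For $1\times 1$ quaternion matrices, $\|q\|_2=|q|$, since $|qx|=|q||x|$. The recursively defined blocks become the scalars $b_i=a_{k-1}a_i-a_{i-1}$ and $c_i=-a_{k-1}b_i+a_{k-2}a_i-a_{i-2}$, with $\|B_i\|_2=|b_i|$ and $\|C_i\|_2=|c_i|$. The cross terms become $\|\sum B_iC_i^H\|_2=|\sum b_i\overline{c_i}|$, because $C_i^H=\overline{c_i}$. Substituting these into $\eta_1,\eta_2,\tau_1,\tau_2$ gives the displayed expressions.

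\textbf{Step 3: account for the swapped labels.} In the corollary the names $\eta_1$ and $\eta_2$ are interchanged relative to the theorem. The final bound is symmetric in $\eta_1,\eta_2$: both $\eta_1+\eta_2$ and $(\eta_1-\eta_2)^2$ are unchanged under the swap. So the relabeling does not affect the conclusion.

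\textbf{Step 4: check the degree hypothesis.} Degree $k\ge 5$ (written $n\ge5$ in the statement) makes the block partition of $C_L^3$ meaningful. It ensures $K_{11}$ contains the shifted identity rows together with $A_0,\dots,A_{k-3}$, and that Lemma \ref{lemma 3.4} applies with $A_0,A_1,A_2$ all inside the first block. This is also what keeps the subtraction inside the square root in $\eta_1$ of the theorem nonnegative.

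The corollary inherits its validity from Theorem \ref{theorem 3.7}, so the only points needing care are the label swap and the typographical index ($|c_j|$ should read $|c_i|$). The substantive work lies in the theorem itself, specifically the claimed norms $\|K_{12}\|_2=\tau_1$, $\|K_{21}\|_2=\tau_2$ and $\|K_{22}\|_2=\eta_2$. These are only needed as upper bounds, and I would obtain them exactly as in the proof of Theorem \ref{theorem 3.6}. Form $K_{21}K_{21}^H$ as a $2\times2$ block matrix, apply Lemma \ref{lemma 3.1} and Lemma \ref{lemma 3.2}, and bound the diagonal blocks by the triangle inequality.
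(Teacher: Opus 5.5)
Your reduction is the same as the paper's. The corollary is Theorem~\ref{theorem 3.7} with $n=1$, using $\|A_i\|_2=|a_i|$, $\|B_i\|_2=|b_i|$, $\|C_i\|_2=|c_i|$. Your remarks on zeros being right eigenvalues (take $x=1$), on the harmless interchange of $\eta_1$ and $\eta_2$, and on the need for $k\ge 5$ are all correct.

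\textbf{The error.} You are wrong that $|c_j|$ is the only misprint. Write $x_i:=|b_i|^2-|c_i|^2$.
\begin{itemize}
\item Under the square root in $\eta_1$ the statement has $\sum_{i=k-2}^{k-1}x_i^2$, the sum of squares. Theorem~\ref{theorem 3.7}'s $\eta_2$ has the same defect, so substitution just carries it over.
\item The argument you sketch for $K_{22}$ (form $K_{22}K_{22}^H$, apply Lemma~\ref{lemma 3.2}, bound the diagonal blocks) produces $(a-d)^2$ with $a-d=\sum_{i=k-2}^{k-1}x_i$. That is the square of the sum.
\item For $n=1$ that argument loses nothing. $K_{22}K_{22}^H$ is a $2\times 2$ Hermitian matrix, so with $s=\sum_{i=k-2}^{k-1}b_i\overline{c_i}$ one has exactly $\|K_{22}\|_2^2=\frac12\bigl[a+d+\sqrt{(a-d)^2+4|s|^2}\bigr]$.
\item The displayed quantity replaces $(x_{k-2}+x_{k-1})^2$ by $x_{k-2}^2+x_{k-1}^2$. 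It therefore falls strictly below $\|K_{22}\|_2$ whenever $x_{k-2}x_{k-1}>0$.
\end{itemize}

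\textbf{Counterexample to the literal statement.} Take $p(z)=z^5+z^4-z^3$, so $a_4=1$, $a_3=-1$, $a_0=a_1=a_2=0$.
\begin{itemize}
\item Then $(b_0,\dots,b_4)=(0,0,0,-1,2)$ and $(c_0,\dots,c_4)=(0,0,0,2,-3)$.
\item Hence $\tau_2=0$ and $\eta_2=1$, and the bound reduces to $\eta_1^{1/3}$.
\item As displayed, $\eta_1=\bigl(\tfrac12[18+\sqrt{9+25+256}]\bigr)^{1/2}\approx 4.185$. This gives $|z|\le 1.6115$.
\item But $z=-\tfrac{1+\sqrt5}{2}$ is a zero with $|z|\approx 1.6180$.
\item With the square of the sum, $\eta_1=\bigl(\tfrac12[18+\sqrt{64+256}]\bigr)^{1/2}=2+\sqrt5$. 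The bound becomes $(2+\sqrt5)^{1/3}=\tfrac{1+\sqrt5}{2}$, which is attained.
\end{itemize}

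So your proof, like the paper's one-line specialization, establishes the corollary only after $\eta_1$ is corrected to contain $\bigl(\sum_{i=k-2}^{k-1}(|b_i|^2-|c_i|^2)\bigr)^2$, as $\tau_2$ already does. With that correction, the rest of your argument goes through.
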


\section{Conclusion}

This paper has established new upper bounds for the right eigenvalues of monic matrix polynomials over the quaternion division algebra. By employing spectral norm inequalities for partitioned quaternionic matrices and applying them to quaternionic block companion matrices, we have developed three main theorems that yield progressively sharper bounds for right eigenvalues. A key innovation in our approach lies in analyzing higher powers of the companion matrix rather than examining it directly, which enables the derivation of improved estimates. The results naturally specialize to bounds for zeros of scalar quaternionic polynomials, offering enhancements over existing bounds in the literature. Throughout this work, we have carefully addressed the fundamental challenges arising from the noncommutative nature of quaternion multiplication, demonstrating that classical matrix analysis techniques can be successfully extended to the quaternionic setting through appropriate modifications.
Several promising avenues for future research emerge from this work:

\section{Future Research Direction} This paper focuses on upper bounds for right eigenvalues, establishing complementary lower bounds would provide complete localization regions for eigenvalues of quaternionic matrix polynomials. Generalizing these results to non-monic quaternionic matrix polynomials would broaden the applicability of the bounds. Investigating eigenvalue bounds for quaternionic matrix polynomials with special structures (such as symmetric or Hermitian polynomials) could yield tighter estimates.

\section{Declaration}
\textbf{Availabilty of data and material}\\
 Data availability is not applicable to this article as no new data were created or analyzed in this study.\\

\noindent \textbf{Competing interests}\\
The author declare that they have no competing interests.\\

\noindent \textbf{Funding}\\
The research of first author is supported by DST Inspire Fellowship (ID No. IF210629).

\end{document}